\newtheorem{algorithm}{Algorithm}
\begin{document} 

\title{Parallel projection methods for variational inequalities involving common fixed point problems
}

\titlerunning{Parallel methods for CSVIPs and CFPPs}        

\author{Dang Van Hieu $^*$ 
}

\authorrunning{D.V. Hieu} 

\institute{$^{*}$ Department of Mathematics, 
Vietnam National University, Hanoi \at
              334 Nguyen Trai, Thanh Xuan, Hanoi,
Vietnam \\
              Tel.: +84-979817776\\
              \email{dv.hieu83@gmail.com}
}

\date{Received: date / Accepted: date}

\maketitle

\begin{abstract} 
In this paper, we introduce two novel parallel projection methods for finding a solution of a system of 
variational inequalities which is also a common fixed point of a family of (asymptotically) $\kappa$ - strict pseudocontractive mappings. 
A technical extension in the proposed algorithms helps in computing practical numerical 
experiments when the number of subproblems is large. Some numerical examples are implemented to 
demonstrate the efficiency of parallel computations.
\keywords{Hybrid method \and Subextragradient method \and Parallel computation \and Fixed point problem \and Variational inequality.}
\subclass{47H05 \and 47H09 \and 47H10 \and 47J20 \and 65Y05}
\end{abstract}
\section{Introduction}\label{intro}
Numerous problems in science and engineering, including optimization problems, fixed point problems, transportation problems, 
financial equilibrium problems, migration equilibrium problems etc. \cite{BO1994,DM1992,HS1966} lead to study variational 
inequality problems (VIP). Most of existing algorithms for solving VIPs in Hilbert space were based on the metric projection onto closed convex sets 
\cite{CGR2011a,HS1966,K1976,Y2001}.

In 1976, Korpelevich \cite{K1976} proposed the extragradient method for solving VIP for a Lipschitz continuous and monotone mapping $A$ 
on a closed convex set $C$ in Euclidean space and it was extended to Hilbert space by Nadezhkina and Takahashi \cite{NT2006a}. 
The projection plays an important role in constrained optimization problems. However, it is only found exactly when $C$ has a simple structure, 
for instance, as balls, hyperplanes or half-spaces. In 2011, authors 
in \cite{CGR2011a} proposed the subextragradient method where they replaced the second projection in the extragradient method onto $C$ by 
one onto a specially constructed halfspace. Moreover, they also introduced a modification of the subextragradient method \cite{CGR2011a} 
for finding a common point of the solution set of a variational inequality and the fixed point set of a nonexpansive mapping.

In recent years, the problem which is called the common solutions to variational inequality problems (CSVIP) \cite{CGRS2012} 
has been widely studied both theoretically and algorithmically. CSVIP is very general, in the sense that, it includes many special mathmatical models as: 
the convex feasibility problem, the common fixed point problem, the common minimizer problem, the common saddle point problem (CFPP), 
the variational inequality problem over the intersection of convex sets, the hierarchical variational inequality problem. Some algorithms 
proposed for solving CSVIP can be found in \cite{ABH2014,CGR2012b,CGRS2012,H2015,H2015a,Y2001}. Most of existing 
methods for CSVIP is inherently sequential. This will be costly on a single processor when the number of the subproblems of CSVIP 
is large. 

This paper focuses on the problem of finding a solution of CSVIP for Lipschitz continuous and monotone operators $A_i, i=1,\ldots,N$ 
involving (asymptotically) $\kappa$ - strict pseudocontractive mappings $S_j,j=1,\ldots,M$. Two parallel projection algorithms 
are proposed without using the product space \cite[Section 7.2]{CGR2012b} and their strong convergence is established. Our  
algorithms can be considered as the improvements of \cite[Algorithm 3.1]{CGRS2012} when we have replaced the extragradient method by 
the subextragradient method \cite[Algorithm 4.1]{CGR2011a} in which the second projection may be easily performed more on a specially 
constructed half-space. Moreover, using the parallel splitting-up technique in \cite[Algorithm 3.4]{H2015} we have designed the simultaneous 
iteration methods. Thus, numerical experiments can be implemented on computing clusters. A technical extension (see, Step 3 of Algorithm 
$\ref{Algor.1}$ below) can help us in numerical computations without solving optimization problems onto the intersection of $N+1$ sets 
when the number of the subproblems $N$ is large which is an obstacle in \cite[the final step of Algorithm 3.1]{CGRS2012}. In addition 
a minor level of generality from nonexpansive mappings to (asymptotically) $\kappa$ - strict pseudocontractive mappings is also studied in 
the proposed algorithms.

This paper is organized as follows: In Section $\ref{pre}$ we collect some definitions and priminary results used in the paper. 
Section $\ref{main}$ deals with proposing two parallel algorithms and analysising their convergence. In Section $\ref{numerical.example}$, 
several numerical experiments are illustrated for the efficiency of the proposed parallel hybrid algorithms.

\section{Preliminaries}\label{pre}
In this section, we recall some definitions and results for furtther researches. Let $H$ be a real Hilbert space with the inner product 
$\left\langle .,.\right\rangle$ and the induced norm $||.||$. We begin with some concepts of the monotonicity of an operator.
\begin{definition}\cite{R1970}
An operator $A:H \to H$ is said to be
\begin{itemize}
\item [$(i)$] monotone if $\left\langle A(x)-A(y),x-y\right\rangle\ge 0$, for all $x,y\in H$;
\item [$(ii)$] $\alpha$ - inverse strongly monotone if there exists a positive constant $\alpha$ such that
$$
\left\langle A(x)-A(y), x-y\right\rangle\ge \alpha||A(x)-A(y)||^2, \quad \forall x,y\in H;
$$
\item [$(iii)$] maximal monotone if it is monotone and its graph is not a proper subset of the one of any other monotone operator;
\item [$(iv)$] $L$ - Lipschitz continuous if there exists a positive constant $L$ such that $||A(x)-A(y)||\le L||x-y||$ for all $x,y\in H$.
\end{itemize}
\end{definition}
Let $C$ be a nonempty closed and convex subset of $H$. The variational inequality problem (VIP) for an operator $A$ on $C$ is to find $x^*\in C$ such that
\begin{equation}\label{eq:VIP}
\left\langle A(x^*),x-x^* \right\rangle \ge 0, \quad \forall x\in C.
\end{equation}
The set of solutions of VIP $(\ref{eq:VIP})$ is denoted by $VI(A,C)$. 
We have the following result concerning with the convexity and closedness of the solution set $VI(A,C)$.
\begin{lemma}\label{lem:T2000}\cite{T2000}
 Let $C$ be a nonempty, closed convex subset of a Hilbert space $H$ and $A$ be a monotone, hemicontinuous mapping from $C$ into $H$. Then
$$
VI(A,C)=\left\{u\in C:\left\langle v-u,A(v)\right\rangle\ge 0, \quad \forall v\in C\right\}.
$$
\end{lemma}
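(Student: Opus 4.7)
The plan is to prove this as the classical Minty lemma, by establishing set equality via two inclusions. Denote the right-hand side by
\[
M=\{u\in C:\langle v-u,A(v)\rangle\ge 0,\ \forall v\in C\}.
\]

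First I would prove the inclusion $VI(A,C)\subseteq M$, which uses only monotonicity. If $u\in VI(A,C)$, then by definition $\langle A(u),v-u\rangle\ge 0$ for all $v\in C$. Monotonicity of $A$ gives $\langle A(v)-A(u),v-u\rangle\ge 0$, and adding these two inequalities yields $\langle A(v),v-u\rangle\ge 0$, so $u\in M$.

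The more interesting inclusion is $M\subseteq VI(A,C)$, and this is where hemicontinuity enters. Fix $u\in M$ and an arbitrary $v\in C$. For any $t\in(0,1]$, convexity of $C$ gives $v_t:=u+t(v-u)\in C$. Applying the defining inequality for $M$ at the point $v_t$ (noting $v_t-u=t(v-u)$) produces
\[
0\le \langle v_t-u,A(v_t)\rangle = t\,\langle v-u,A(v_t)\rangle,
\]
hence $\langle v-u,A(v_t)\rangle\ge 0$. Now I would let $t\to 0^+$: hemicontinuity of $A$ along the segment from $u$ to $v$ ensures $A(v_t)\to A(u)$ in the appropriate sense for passing to the limit inside the inner product with the fixed vector $v-u$. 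This yields $\langle v-u,A(u)\rangle\ge 0$, and since $v\in C$ was arbitrary, $u\in VI(A,C)$.

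The only delicate step is the limit argument in the second inclusion; everything else is a direct manipulation of the monotonicity and convexity assumptions. The proof does not require closedness of $C$ in any essential way for the set-equality itself, but closedness is implicitly used via the standing hypothesis and is needed in downstream applications where one projects onto $C$. I would present the argument in two short paragraphs, one per inclusion, and explicitly invoke hemicontinuity to justify the continuity of $t\mapsto\langle v-u,A(v_t)\rangle$ at $t=0$.
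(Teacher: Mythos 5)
Your proposal is correct: it is the classical Minty-lemma argument (monotonicity for the forward inclusion, the segment $v_t=u+t(v-u)$ plus hemicontinuity and a passage to the limit $t\to 0^+$ for the reverse inclusion), and both directions are carried out without gaps. The paper itself gives no proof of this lemma --- it is quoted directly from Takahashi's book \cite{T2000} --- and your argument is exactly the standard proof found in that reference, so there is nothing to compare beyond noting that you have correctly identified where each hypothesis (monotonicity, convexity of $C$, hemicontinuity) is used.
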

\begin{definition}\cite{BP1967,GK1972}
A mapping $S:H\to H$ is said to be
\begin{itemize}
\item [$(i)$] nonexpansive if  $||Sx-Sy||\le ||x-y||$ for all $x,y\in H;$
\item [$(ii)$] uniformly $L$ - Lipschitz continuous if there exists a constant $L>0$ such that
$$ ||S^nx-S^ny||\le L ||x-y||; $$
\item [$(iii)$] asymptotically nonexpansive if there exists a sequence $\left\{k_n\right\}\subset [1;+\infty)$ with $k_n\to 1$ such that 
$$ ||S^n x-S^n y||\le k_n ||x-y|| ,\quad \forall x,y\in H, n\ge 1;$$
\item [$(iv)$] $\kappa$-strict pseudocontractive if there exists a constant $\kappa\in [0;1)$ such that
$$ ||Sx-Sy||^2\le||x-y||^2+\kappa||(I-S)x-(I-S)y||^2,\quad \forall x,y\in H; $$
\item [$(v)$] asymptotically $\kappa$-strict pseudocontractive if there exist a constant $\kappa\in [0;1)$ and $\left\{k_n\right\}\subset [1;+\infty)$ with $k_n\to 1$ such that 
$$ ||S^nx-S^ny||^2\le k_n||x-y||^2+\kappa||(I-S^n)x-(I-S^n)y||^2,\forall x,y\in H, n\ge 1. $$
\end{itemize}
\end{definition}
\begin{lemma}\cite{SXY2009}\label{lem.demiclose}
Let H be a real Hilbert space, C be a nonempty closed convex subset of H and $S:C\to C$ be an asymptotically $\kappa$-strict 
pseudocontraction with the sequence $\left\{k_n\right\}\subset [1;\infty), k_n\to 1$ and the fixed point set $F(S)$. Then
\begin{enumerate}
\item [$(i)$] $F(S)$ is a closed convex subset of $H$.
\item [$(ii)$] $I-S$ is demiclosed, i.e., whenever $\left\{x_n\right\}$ is a sequence in $C$ weakly converging to some $x\in C$ and the sequence 
$\left\{(I-S)x_n\right\}$ strongly converges to some $y$, it follows that $(I-S)x=y$. 
\item [$(iii)$] S is uniformly $L$-Lipschitz continuous with the constant $$L=\sup\left\{\frac{\kappa+\sqrt{1+(1-\kappa)(k_n-1)}}{1+\kappa}:n\ge 1\right\}.$$
\end{enumerate}
\end{lemma}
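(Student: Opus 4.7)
The plan is to tackle the three parts in the order (iii), (i), (ii), since the Lipschitz bound of (iii) is needed as continuity in (i), and both results feed into the demiclosedness argument of (ii). Part (iii) is essentially an algebraic exercise: substituting $u := S^n x - S^n y$ and $v := x - y$ into the defining inequality, expanding $\|v - u\|^2 = \|v\|^2 - 2\langle u, v\rangle + \|u\|^2$, and rearranging gives $(1 - \kappa)\|u\|^2 \le (k_n + \kappa)\|v\|^2 - 2\kappa\langle u, v\rangle$; applying Cauchy--Schwarz and using the identity $\kappa^2 + (1-\kappa)(k_n+\kappa) = 1 + (1-\kappa)(k_n-1)$ then yields a quadratic inequality in $\|u\|/\|v\|$ whose positive root is exactly the claimed constant. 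In particular, taking $n = 1$ makes $S$ itself Lipschitz, hence continuous.

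For (i), closedness of $F(S)$ is immediate from continuity of $S$. For convexity, I fix $x_1, x_2 \in F(S)$, $t \in [0,1]$, and $z = tx_1 + (1-t)x_2$. Combining the defining inequality applied to each pair $(z, x_i)$ (using $S^n x_i = x_i$) with the identity
\[
\|tw_1 + (1-t)w_2 - u\|^2 = t\|w_1 - u\|^2 + (1-t)\|w_2 - u\|^2 - t(1-t)\|w_1 - w_2\|^2
\]
at $u = S^n z$, together with the evaluations $\|z - x_1\| = (1-t)\|x_1 - x_2\|$ and $\|z - x_2\| = t\|x_1 - x_2\|$, should yield
\[
(1 - \kappa)\|(I - S^n) z\|^2 \le (k_n - 1)\, t(1-t)\, \|x_1 - x_2\|^2.
\]
Since $k_n \to 1$, this forces $S^n z \to z$; continuity of $S$ then gives $Sz = \lim_n S(S^n z) = \lim_n S^{n+1} z = z$, so $z \in F(S)$.

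For (ii), suppose $x_m \rightharpoonup x$ and $(I - S)x_m \to y$; the target is $(I - S)x = y$. The same rearrangement used in (iii), now applied to the pair $(x_m, x)$ with $S^n$, produces
\[
(1 - \kappa)\|(I - S^n)x_m - (I - S^n)x\|^2 \le (k_n - 1)\|x_m - x\|^2 + 2\langle x_m - x,\, (I - S^n)x_m - (I - S^n)x\rangle.
\]
The plan is to combine this with the Opial property of the Hilbert space: assuming $(I - S)x \ne y$ for contradiction, I would compare $\liminf_m \|x_m - x\|$ against $\liminf_m \|x_m - z\|$ for a suitable $z$ built from $Sx$ and $y$, use the uniform Lipschitz constant from (iii) to transfer strong convergence from $I - S$ to $I - S^n$ (via the telescoping $(I - S^n) = \sum_{j=0}^{n-1}(I - S)S^j$), kill the inner product in the limit $m \to \infty$ by the weak-strong pairing, and finally let $n \to \infty$ to eliminate the $(k_n - 1)$ residual. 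This last coordination is the main obstacle: the defining inequality is natively about $S^n$ while the hypothesis controls only $(I-S)x_m$, so one must order the iterated limits carefully and lean on (iii) to bridge the gap before Opial's property can deliver the contradiction.
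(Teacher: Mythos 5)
The paper states this lemma as a quoted result from \cite{SXY2009} and supplies no proof of its own, so there is nothing in the paper to compare routes against; your proposal has to stand on its own. Parts (iii) and (i) essentially do. For (iii), the substitution and the quadratic in $\|u\|/\|v\|$ work, and the identity $\kappa^2+(1-\kappa)(k_n+\kappa)=1+(1-\kappa)(k_n-1)$ is correct; note, however, that the positive root you actually obtain is $\bigl(\kappa+\sqrt{1+(1-\kappa)(k_n-1)}\bigr)/(1-\kappa)$, with $1-\kappa$ in the denominator rather than the $1+\kappa$ printed in the lemma. The printed constant is a typo (test $S=-2I$ on $\Re$, which is a $\tfrac13$-strict pseudocontraction with Lipschitz constant $2$: the $1-\kappa$ formula gives $2$, the $1+\kappa$ formula gives $1$), so your derivation is right but your claim that the root "is exactly the claimed constant" is not literally true. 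For (i), the combination of the two instances of the defining inequality with the convexity identity does close to $(1-\kappa)\|(I-S^n)z\|^2\le (k_n-1)\,t(1-t)\,\|x_1-x_2\|^2$, and the continuity argument $Sz=\lim_nS^{n+1}z=z$ is fine.

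Part (ii) is where the proposal fails: what you have written is an announced plan containing an obstacle you yourself flag, not a proof. The step that breaks is "kill the inner product in the limit $m\to\infty$ by the weak--strong pairing." In $\langle x_m-x,\,(I-S^n)x_m-(I-S^n)x\rangle$ the second factor is only known to be bounded: the telescoping $(I-S^n)x_m=\sum_{j=0}^{n-1}\bigl(S^jx_m-S^j(Sx_m)\bigr)$ together with (iii) gives $\|(I-S^n)x_m\|\le nL\,\|(I-S)x_m\|$, i.e.\ boundedness, not strong convergence, so weak convergence of $x_m-x$ to $0$ does not make the pairing vanish. The standard argument avoids the inner product altogether: in the case $y=0$ (the only case the paper actually uses, in Lemma 3.4), one observes $\|x_m-S^nx_m\|\le\bigl(1+(n-1)L\bigr)\|x_m-Sx_m\|\to0$ as $m\to\infty$ for each fixed $n$, introduces $\phi(z):=\limsup_m\|x_m-z\|^2$, which satisfies $\phi(z)=\phi(x)+\|x-z\|^2$ because $x_m\rightharpoonup x$, and evaluates at $z=S^nx$; combining with the defining inequality yields $(1-\kappa)\|x-S^nx\|^2\le(k_n-1)\phi(x)$, whence $S^nx\to x$ as $n\to\infty$ and $Sx=x$ by continuity. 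Your Opial-by-contradiction scheme with a comparison point "built from $Sx$ and $y$" is never carried out, and for general $y\ne0$ the difficulty you name is genuine; as it stands, (ii) is not proved.
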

It is easy to show that in any real Hilbert space, the following inequality holds
\begin{equation}\label{lem.aux}
||ax+(1-a)y||^2\le a||x||^2+(1-a)||y||^2-a(1-a)||x-y||^2,~\forall x,y\in H, a\in[0,1].
\end{equation}
For every $x\in H$, the metric projection $P_Cx$ of $x$ onto $C$ defined by 
$$
P_C x=\arg\min\left\{\left\|y-x\right\|:y\in C\right\}.
$$
Since C is a nonempty closed and convex subset of $H$, $P_C x$ exists and is unique. The projection $P_C:H\to C$ has the following characterizations:
\begin{lemma}\label{PropertyPC}
\begin{itemize}
\item [$(i)$] For all $y\in H, x\in C$, $\left\|x-P_C y\right\|^2+\left\|P_C y-y\right\|^2\le \left\|x-y\right\|^2.$
\item [$(ii)$] $z=P_C x$ if and only if $\left\langle x-z,z-y \right\rangle \ge 0,\quad \forall y\in C.$
\end{itemize}
\end{lemma}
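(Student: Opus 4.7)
The plan is to prove part (ii) first, since its variational characterization gives a short route to part (i). For the forward direction of (ii), I would assume $z=P_Cx$ and use convexity of $C$: for any $y\in C$ and $t\in(0,1]$, the point $z_t:=z+t(y-z)$ lies in $C$, so the defining minimality of $z$ yields $\|x-z\|^2\le\|x-z_t\|^2$. Expanding the right side and cancelling the common term gives $0\le -2t\langle x-z,y-z\rangle+t^2\|y-z\|^2$. Dividing by $t>0$ and sending $t\to 0^+$ produces $\langle x-z,y-z\rangle\le 0$, i.e.\ $\langle x-z,z-y\rangle\ge 0$. For the converse, given $z\in C$ with $\langle x-z,z-y\rangle\ge 0$ for all $y\in C$, I would expand
\[
\|x-y\|^2=\|x-z\|^2+2\langle x-z,z-y\rangle+\|z-y\|^2\ge\|x-z\|^2,
\]
which shows that $z$ minimizes $\|x-\cdot\|$ over $C$, hence $z=P_Cx$ by uniqueness of the projection.

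For part (i), I would apply (ii) with $x$ replaced by $y$ and $z=P_Cy$: the characterization yields $\langle y-P_Cy,P_Cy-x\rangle\ge 0$ for every $x\in C$, which by symmetry of the inner product is the same as $\langle x-P_Cy,P_Cy-y\rangle\ge 0$. Writing $x-y=(x-P_Cy)+(P_Cy-y)$ and expanding gives
\[
\|x-y\|^2=\|x-P_Cy\|^2+2\langle x-P_Cy,P_Cy-y\rangle+\|P_Cy-y\|^2,
\]
and dropping the nonnegative cross term delivers the claimed inequality $\|x-P_Cy\|^2+\|P_Cy-y\|^2\le\|x-y\|^2$.

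Since both facts are classical, no genuine obstacle is anticipated. The only step that requires a little care is the limiting argument $t\to 0^+$ in the forward direction of (ii), but this is elementary calculus once the admissible direction $z_t\in C$ has been identified via convexity. The rest reduces to straightforward algebra with the inner product and the uniqueness of the projection (which is itself guaranteed in a Hilbert space by strict convexity of the norm and was already invoked in the paragraph introducing $P_C$).
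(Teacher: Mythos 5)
Your proof is correct and entirely standard: the variational characterization in (ii) is derived by the usual convexity/limiting argument, and (i) follows by expanding $\|x-y\|^2$ and discarding the nonnegative cross term. The paper states this lemma without proof as a classical fact about the metric projection, so there is nothing to compare against; your argument fills that gap in the expected way.
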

The normal cone $N_C$ to a set $C$ at a point $ x \in C$ defined by
$$
N_C(x)=\left\{x^*\in H:\left\langle x-y,x^*\right\rangle\ge 0,\quad \forall y\in C\right\}.
$$
We have the following result.
\begin{lemma}\cite{R1970}\label{lem:N-Normal-Cone}
Let $C$ be a nonempty closed convex subset of a Hilbert space $H$ and let $A$ be a monotone and hemi-continuous mapping of $C$ into 
$H$ with $D(A) = C$. 
Let $Q$ be a mapping defined by:
$$
Q(x)=
\left \{
\begin{array}{ll}
Ax+N_C(x)\quad&\mbox{ if }\quad x\in C,\\
\emptyset  \quad& \mbox{if}\quad x\notin C.
\end{array}
\right.
$$
Then $Q$ is a maximal monotone and $Q^{-1}0=VI(A,C)$.
\end{lemma}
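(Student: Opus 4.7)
The plan is to split the statement into two independent claims: (a) the solution identification $Q^{-1}0 = VI(A,C)$, and (b) maximal monotonicity of $Q$. Claim (a) is essentially a direct unpacking of the definitions. Observe that $0 \in Q(x)$ requires $x \in C$ and $-Ax \in N_C(x)$, which by the definition of the normal cone reads $\langle x - y, -Ax\rangle \ge 0$ for every $y \in C$, i.e. $\langle Ax, y - x\rangle \ge 0$ for every $y \in C$. This is precisely $x \in VI(A,C)$. The reverse inclusion is the same chain read backwards.

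For (b) I would first record that $Q$ is monotone. The operator $A$ is monotone by assumption, and $N_C$ is monotone straight from its definition (if $x^* \in N_C(x)$ and $y^* \in N_C(y)$ with $x,y \in C$, adding the two inequalities $\langle x-y,x^*\rangle\ge 0$ and $\langle y-x,y^*\rangle\ge 0$ gives $\langle x-y,x^*-y^*\rangle\ge 0$). Hence the pointwise sum $Q = A + N_C$ is monotone on $C$.

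The heart of the argument, and the step I expect to be the main obstacle, is verifying the maximality of $Q$. The issue is that the sum of two maximal monotone operators need not be maximal in general, so I cannot just cite a sum rule; I will use a direct Minty-type argument that exploits the hemicontinuity of $A$ together with the full-domain hypothesis $D(A)=C$. Suppose $(x_0,y_0) \in H\times H$ satisfies $\langle x-x_0,\, u-y_0\rangle \ge 0$ for every $x \in C$ and every $u \in Ax + N_C(x)$. Choosing $u = Ax$ (which corresponds to $0 \in N_C(x)$) already gives $\langle x - x_0,\, Ax - y_0\rangle \ge 0$ for all $x \in C$. I would first use the freedom in $u$ — adding arbitrary elements of $N_C(x)$ for suitable boundary $x$ — to force $x_0 \in C$. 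Then, for any fixed $v \in C$, substitute $x_t = x_0 + t(v - x_0) \in C$ for $t \in (0,1]$, divide by $t$, and let $t \downarrow 0$; hemicontinuity of $A$ makes $Ax_t \to Ax_0$ weakly, yielding $\langle v - x_0,\, Ax_0 - y_0\rangle \ge 0$ for every $v \in C$. This says $y_0 - Ax_0 \in N_C(x_0)$, so $y_0 \in Ax_0 + N_C(x_0) = Q(x_0)$, establishing maximality.

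Finally I would double-check that no additional qualification condition (such as Rockafellar's interiority assumption on domains) is needed in this specific situation, precisely because $D(A)$ and $\mathrm{dom}(N_C)$ coincide with $C$ and the Minty argument above goes through using only hemicontinuity, which is the weakest continuity notion compatible with the classical variational inequality machinery already cited in Lemma~\ref{lem:T2000}.
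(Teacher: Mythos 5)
The paper gives no proof of this lemma at all --- it is quoted directly from Rockafellar \cite{R1970} --- so there is no internal argument to measure yours against; what you have written is the standard self-contained proof of that classical result, and it is essentially correct. The identification $Q^{-1}0=VI(A,C)$ is indeed a one-line unpacking of the definition of $N_C$; monotonicity of $A+N_C$ is immediate; and the Minty-type verification of maximality is exactly the right tool here, since (as you correctly observe) no sum rule or interiority qualification is needed when maximality is checked by hand using hemicontinuity and $D(A)=C$. The only detail you leave genuinely unfilled is the step forcing $x_0\in C$: ``adding arbitrary elements of $N_C(x)$ for suitable boundary $x$'' should be made concrete by taking $x=P_Cx_0$ and $u=Ax+t(x_0-P_Cx_0)$ with $t>0$. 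By Lemma~\ref{PropertyPC}(ii) one has $x_0-P_Cx_0\in N_C(P_Cx_0)$, and since normal cones are cones the pair $(x,u)$ lies in the graph of $Q$ for every $t>0$; the monotonicity relation then reads $0\le\left\langle P_Cx_0-x_0,\,A(P_Cx_0)-y_0\right\rangle-t\left\|x_0-P_Cx_0\right\|^2$, which fails for large $t$ unless $x_0=P_Cx_0\in C$. With that inserted, the remainder of your argument --- restricting to $u=Ax$, substituting $x_t=x_0+t(v-x_0)$, dividing by $t$, and passing to the limit via hemicontinuity to conclude $y_0-Ax_0\in N_C(x_0)$ --- goes through exactly as written.
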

\section{Main results}\label{main}
In this section, we consider CSVIP \cite{CGRS2012} of finding $x^*\in K:=\cap_{i=1}^N K_i$ such that
\begin{equation}\label{eq:CSVIP}
\left\langle A_i(x^*), x-x^*\right\rangle\ge 0, \quad \forall x\in K_i, i=1,\ldots,N,
\end{equation}
where $K_i,~ i=1, \ldots,N$ are $N$ nonempty, closed and convex subsets of $H$ such that $K:=\cap_{i=1}^N K_i\ne \O$ and 
$A_i: H \to H$ is a Lipschitz continuous and monotone mapping for each $i=1, \ldots,N$. 
We present here two parallel projection algorithms for finding a solution of CSVIP $(\ref{eq:CSVIP})$ which is also a common fixed point 
of a finite family of (asymptotically) $\kappa$ - strict pseudocontractive mappings $\left\{S_j\right\}_{j=1}^M$. 
In the sequel, without loss of generality, we assume that the operators $\left\{A_i\right\}_{i=1}^N$ are Lipschitz continuous 
with a same constant $L>0$ and the mappings $\left\{S_j\right\}_{j=1}^M$ are asymptotically strict pseudocontractive with a same constant 
$\kappa\in [0,1)$ and a same sequence $\left\{k_n\right\}\subset [1,\infty)$ and $k_n\to 1$ as $n\to\infty$. Moreover, in Algorithm $\ref{Algor.1}$ 
below, the assumption of the boundedness of the solution set
$$
F=\left(\cap_{i=1}^N VI(A_i,K_i)\right)\bigcap \left(\cap_{j=1}^MF(S_j)\right)
$$
is required, i.e., there exists a positive real number $\omega$ such that $F\subset \Omega:=\left\{u\in H: ||u||\le \omega\right\}$.\\
\begin{algorithm}\label{Algor.1}
\textbf{Initialization:} $x_0\in H, n:=0$. The control parameter sequences $\left\{\alpha_k\right\},\left\{\beta_k\right\}$ satisfy the following conditions:
\begin{itemize}
\item [$(a)$] $0\le\alpha_k<1$, $\lim_{k\to\infty}\sup\alpha_k<1$;
\item [$(b)$] $\kappa\le\beta_k\le b<1$ for some $b\in (\kappa;1)$;
\item [$(c)$] $0<\lambda<\frac{1}{L}$.
\end{itemize}
\textbf{Step 1.} Find $N$ the projections $y_n^i$ on $K_i$ in parallel 
$$
y_n^i=P_{K_i}(x_n-\lambda A_i(x_n)), i=1,\ldots,N.
$$
\textbf{Step 2.} Find $N$ the projections $z_n^i$ on the half-space $T_n^i$ in parallel
$$
z_n^i=P_{T_n^i}(x_n-\lambda A_i(y_n^i)), i=1,\ldots,N,
$$
where $T_n^i=\left\{v\in H: \left\langle(x_n-\lambda A_i(x_n))-y_n^i,v-y_n^i\right\rangle\le 0\right\}$.\\
\textbf{Step 3.} Find the furthest element from $x_n$ among all $z_n^i$, i.e., 
$$ i_n = {\rm argmax}\{||z_n^i - x_n||: i =1,\ldots,N\},\bar{z}_n:=z^{i_n}_n. $$
\textbf{Step 4.} Find intermediate approximations $u_n^j$ in parallel
$$ u_n^j=\alpha_n x_n+(1-\alpha_n)\left(\beta_n\bar{z}_n+(1-\beta_n) S_j^n \bar{z}_n\right), j=1,\ldots,M. $$
\textbf{Step 5.} Find the furthest element from $x_n$ among all $u_n^j$, i.e., 
$$ j_n= {\rm argmax}\{||u_n^j - x_n||: j =1,\ldots,M\},\bar{u}_n:=u^{j_n}_n. $$
\textbf{Step 6.} Compute $x_{n+1}= P_{C_{n}\cap Q_n}(x_0)$ where $
C_{n} = \{v \in H: ||\bar{u}_n - v||^2\leq ||x_n-v||^2+\epsilon_n\},
Q_{n}= \{v \in H: \left\langle v-x_n,x_n-x_0 \right\rangle\ge 0\}$ and
 $\epsilon_n=(k_n-1)\left(||x_n||+\omega\right)^2$. Set $n:=n+1$ and go back \textbf{Step 1}.
\end{algorithm}
In order to prove the convergence of Algorithm $\ref{Algor.1}$, we need the following lemmas.
\begin{lemma}\label{lem.1} Suppose that $x^* \in F$ and the sequences $\left\{y_n^i\right\}, \left\{z_n^i\right\}$ generated by Step 1 and Step 2 of Algorithm $\ref{Algor.1}$. Then
$$
\left\|z_n^i-x^*\right\|^2\le \left\|x_n-x^*\right\|^2-c\left(\left\|y_n^i-x_n\right\|^2+\left\|z_n^i-y_n^i\right\|^2\right),
$$
where $c=1-\lambda L>0$.
\end{lemma}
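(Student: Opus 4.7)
The plan is to follow the classical subextragradient estimate adapted to the half-space $T_n^i$ instead of the original set $K_i$. The first observation is that $x^* \in K_i \subset T_n^i$: the inclusion $K_i \subset T_n^i$ is immediate from Lemma \ref{PropertyPC}(ii) applied to the projection $y_n^i = P_{K_i}(x_n - \lambda A_i(x_n))$, so $x^*$ lies in the set onto which $z_n^i$ is projected.

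Next I would apply Lemma \ref{PropertyPC}(i) to $z_n^i = P_{T_n^i}(x_n - \lambda A_i(y_n^i))$ with $x^* \in T_n^i$, and expand both squared norms to cancel the common $\lambda^2 \|A_i(y_n^i)\|^2$ term, obtaining
$$
\|z_n^i - x^*\|^2 \le \|x_n - x^*\|^2 - \|z_n^i - x_n\|^2 + 2\lambda\langle A_i(y_n^i), x^* - z_n^i\rangle.
$$
To handle the cross term, I would split $\langle A_i(y_n^i), x^* - z_n^i\rangle = \langle A_i(y_n^i), x^* - y_n^i\rangle + \langle A_i(y_n^i), y_n^i - z_n^i\rangle$ and bound the first piece by monotonicity: since $y_n^i \in K_i$ and $x^* \in VI(A_i, K_i)$, Lemma \ref{lem:T2000} gives $\langle A_i(x^*), y_n^i - x^*\rangle \ge 0$, and monotonicity of $A_i$ then yields $\langle A_i(y_n^i), x^* - y_n^i\rangle \le 0$.

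Then I would rewrite $\|z_n^i - x_n\|^2 = \|z_n^i - y_n^i\|^2 + \|y_n^i - x_n\|^2 + 2\langle z_n^i - y_n^i, y_n^i - x_n\rangle$ and regroup the remaining linear terms as
$$
2\langle z_n^i - y_n^i, (x_n - \lambda A_i(x_n)) - y_n^i\rangle + 2\lambda \langle z_n^i - y_n^i, A_i(x_n) - A_i(y_n^i)\rangle.
$$
The first inner product is nonpositive because $z_n^i \in T_n^i$ and by the definition of $T_n^i$. This is the main structural step where the choice of the half-space $T_n^i$ does exactly what it is designed to do.

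The only remaining obstacle is the second inner product; this is handled routinely by the Lipschitz continuity of $A_i$ together with Cauchy--Schwarz and the elementary inequality $2ab \le a^2 + b^2$, which gives the bound
$$
2\lambda\langle z_n^i - y_n^i, A_i(x_n) - A_i(y_n^i)\rangle \le \lambda L\bigl(\|z_n^i - y_n^i\|^2 + \|y_n^i - x_n\|^2\bigr).
$$
Collecting the estimates produces $\|z_n^i - x^*\|^2 \le \|x_n - x^*\|^2 - (1-\lambda L)\bigl(\|y_n^i - x_n\|^2 + \|z_n^i - y_n^i\|^2\bigr)$, and since $\lambda < 1/L$ we have $c = 1 - \lambda L > 0$, completing the proof.
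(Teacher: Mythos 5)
Your proof is correct and follows essentially the same route as the paper's: the projection inequality of Lemma \ref{PropertyPC}(i) with $x^*\in T_n^i$, the monotonicity bound $\left\langle A_i(y_n^i),x^*-y_n^i\right\rangle\le 0$, the regrouping that isolates $\left\langle z_n^i-y_n^i,(x_n-\lambda A_i(x_n))-y_n^i\right\rangle\le 0$, and the Lipschitz/Cauchy--Schwarz estimate yielding the factor $1-\lambda L$. Your explicit remark that $K_i\subset T_n^i$ (hence $x^*\in T_n^i$) is a small clarification the paper leaves implicit, but the argument is the same.
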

\begin{proof}
Since $A_i$ is monotone on $K_i$ and $y_n^i\in K_i$, we obtain
$$
\left\langle A_i(y_n^i)-A_i(x^*), y_n^i-x^*\right\rangle\ge 0, \quad \forall x^*\in F.
$$
This together with $x^*\in VI(A,K_i)$ implies that $\left\langle A_i(y_n^i), y_n^i-x^*\right\rangle\ge 0.$ Thus, 
\begin{equation}\label{eq:2*}
\left\langle A_i(y_n^i), z_n^i-x^*\right\rangle\ge \left\langle A_i(y_n^i), z_n^i-y_n^i\right\rangle .
\end{equation}
From the characterization of the metric projection onto $T_n^i$, we have
$$
\left\langle z_n^i-y_n^i, (x_n-\lambda A_i(x_n))-y_n^i\right\rangle\le 0.
$$
Thus
\begin{equation}
\begin{array}{ll}
\left\langle z_n^i-y_n^i, (x_n-\lambda A_i(y_n^i))-y_n^i\right\rangle&=\left\langle z_n^i-y_n^i, (x_n-\lambda A_i(x_n))-y_n^i\right\rangle\nonumber\\
&+\lambda\left\langle z_n^i-y_n^i, A_i(x_n)-A_i(y_n^i)\right\rangle\nonumber\\
&\le\lambda\left\langle z_n^i-y_n^i, A_i(x_n)-A_i(y_n^i)\right\rangle.\label{eq:4}
\end{array}
\end{equation}
Putting $t_n^i=x_n-\lambda A_i(y_n^i)$ and rewriting $z_n^i=P_{T_n^i}(t_n^i)$. From Lemma $\ref{PropertyPC}$ and $(\ref{eq:2*})$, we get
\begin{equation}
\begin{array}{ll}
||z_n^i-x^*||^2&\le ||t_n^i-x^*||^2-||P_{T_n^i}(t_n^i)-t_n^i||^2\nonumber\\
&=||x_n-\lambda A_i(y_n^i)-x^*||^2-||z_n^i-(x_n-\lambda A_i(y_n^i))||^2\nonumber\\
&=||x_n-x^*||^2-||z_n^i-x_n||^2+2\lambda\left\langle x^*-z_n^i, A_i(y_n^i)\right\rangle\nonumber\\
&\le||x_n-x^*||^2-||z_n^i-x_n||^2+2\lambda\left\langle y_n^i-z_n^i, A_i(y_n^i)\right\rangle.\label{eq:5}
\end{array}
\end{equation}
From $(\ref{eq:4})$, we also have
\begin{eqnarray}
||z_n^i-x_n||^2&-&2\lambda\left\langle y_n^i-z_n^i, A_i(y_n^i)\right\rangle\nonumber\\ 
&=&||z_n^i-y_n^i+y_n^i-x_n||^2-2\lambda\left\langle y_n^i-z_n^i, A_i(y_n^i)\right\rangle\nonumber\\ 
&=&||z_n^i-y_n^i||^2+||y_n^i-x_n||^2-2\left\langle z_n^i-y_n^i,\left(x_n-\lambda A_i(y_n^i)-y_n^i\right)\right\rangle\nonumber\\ 
&=&||z_n^i-y_n^i||^2+||y_n^i-x_n||^2-2\lambda\left\langle z_n^i-y_n^i, A_i(x_n)-A_i(y_n^i)\right\rangle\nonumber\\
&\ge&||z_n^i-y_n^i||^2+||y_n^i-x_n||^2-2\lambda ||z_n^i-y_n^i||||A_i(x_n)-A_i(y_n^i)||\nonumber\\
&\ge&||z_n^i-y_n^i||^2+||y_n^i-x_n||^2-2L\lambda ||z_n^i-y_n^i||||x_n-y_n^i||\nonumber\\
&\ge&||z_n^i-y_n^i||^2+||y_n^i-x_n||^2-L\lambda \left(||z_n^i-y_n^i||^2+||x_n-y_n^i||^2\right)\nonumber\\
&\ge&(1-L\lambda)||z_n^i-y_n^i||^2+(1-L\lambda)||y_n^i-x_n||^2 \nonumber\\
&=&c\left(||z_n^i-y_n^i||^2+||y_n^i-x_n||^2\right). \nonumber
\end{eqnarray}
This together with $(\ref{eq:5})$ ensures the truth of Lemma $\ref{lem.1}$.
\end{proof}
\begin{lemma}\label{lem.2}
Suppose that Algorithm $\ref{Algor.1}$ reaches to the iteration $n$. Then $F\subset C_n\cap Q_n$ and $x_{n+1}$ is well-defined.
\end{lemma}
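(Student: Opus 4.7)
The plan is to show the two inclusions $F \subset C_n$ and $F \subset Q_n$ separately, after noting that both sets are closed convex half-spaces (the defining inequalities in $C_n$ and $Q_n$ reduce to linear ones after expansion), and then deduce well-definedness of $x_{n+1}$ as a metric projection of $x_0$ onto a nonempty closed convex set.

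First, to establish $F \subset C_n$, I would fix $x^* \in F$ and bound $\|u_n^j - x^*\|^2$ for every $j$. Applying the inequality $(\ref{lem.aux})$ twice to the definition of $u_n^j$ in Step 4 gives
\begin{equation*}
\|u_n^j - x^*\|^2 \le \alpha_n\|x_n-x^*\|^2 + (1-\alpha_n)\bigl[\beta_n\|\bar z_n-x^*\|^2 + (1-\beta_n)\|S_j^n\bar z_n-x^*\|^2 - \beta_n(1-\beta_n)\|\bar z_n - S_j^n\bar z_n\|^2\bigr].
\end{equation*}
Using that $S_j$ is asymptotically $\kappa$-strict pseudocontractive, $\|S_j^n\bar z_n-x^*\|^2 \le k_n\|\bar z_n-x^*\|^2 + \kappa\|\bar z_n - S_j^n\bar z_n\|^2$. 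Substituting and using the condition $\beta_n \ge \kappa$ kills the $\|\bar z_n - S_j^n\bar z_n\|^2$ term (its coefficient becomes $(1-\beta_n)(\kappa-\beta_n)\le 0$), yielding $\|u_n^j - x^*\|^2 \le \alpha_n\|x_n-x^*\|^2 + (1-\alpha_n)k_n\|\bar z_n-x^*\|^2$. Lemma \ref{lem.1} gives $\|\bar z_n-x^*\|\le \|x_n-x^*\|$, so $\|u_n^j-x^*\|^2 \le k_n\|x_n-x^*\|^2 = \|x_n-x^*\|^2 + (k_n-1)\|x_n-x^*\|^2$.

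The key step to close the argument is the $\epsilon_n$ adjustment. Since $x^*\in F \subset \Omega$, we have $\|x^*\|\le \omega$, so $\|x_n-x^*\|\le \|x_n\|+\omega$, hence $(k_n-1)\|x_n-x^*\|^2 \le (k_n-1)(\|x_n\|+\omega)^2 = \epsilon_n$. This yields $\|u_n^j-x^*\|^2 \le \|x_n-x^*\|^2 + \epsilon_n$ for every $j$, and in particular for $j=j_n$, which is exactly the defining inequality of $C_n$ evaluated at $v=x^*$. Therefore $x^*\in C_n$.

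For $F \subset Q_n$ I would argue by induction on $n$. When $n=0$, $Q_0 = H$ since $x_0 - x_0 = 0$, so trivially $F\subset Q_0$. Assuming $F \subset C_n\cap Q_n$, this set is a nonempty closed convex subset of $H$, so $x_{n+1}=P_{C_n\cap Q_n}(x_0)$ exists and is unique. Applying Lemma \ref{PropertyPC}(ii), for every $v\in C_n\cap Q_n$,
\begin{equation*}
\langle x_0 - x_{n+1}, x_{n+1}-v\rangle \ge 0, \qquad \text{i.e.,}\qquad \langle v - x_{n+1}, x_{n+1}-x_0\rangle \ge 0,
\end{equation*}
which is precisely the defining condition for $v \in Q_{n+1}$. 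In particular, since $F\subset C_n\cap Q_n$, we get $F\subset Q_{n+1}$, closing the induction. Combining with the first part we obtain $F \subset C_n\cap Q_n$ for all $n$, and well-definedness of $x_{n+1}$ follows simultaneously from the induction step. The main subtlety is the proper choice and use of $\epsilon_n$ together with the boundedness of $F$ to absorb the defect $k_n-1$ coming from the asymptotic-pseudocontractivity; everything else is a routine combination of Lemma \ref{lem.1}, the convexity identity $(\ref{lem.aux})$, and the projection characterization.
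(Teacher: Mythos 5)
Your proposal is correct and follows essentially the same route as the paper: the convexity identity $(\ref{lem.aux})$ combined with the asymptotic $\kappa$-strict pseudocontractivity and $\beta_n\ge\kappa$ to discard the residual term, Lemma $\ref{lem.1}$ for $\|\bar z_n-x^*\|\le\|x_n-x^*\|$, the bound $(k_n-1)\|x_n-x^*\|^2\le\epsilon_n$ via $\|x^*\|\le\omega$, and the induction through the projection characterization for $F\subset Q_n$. No gaps; the argument matches the paper's proof step for step.
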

\begin{proof}
Since $A_i$ is Lipschitz continuous, $A_i$ is continuous. Lemma $\ref{lem:T2000}$ ensures that $VI(A_i, K_i)$ is closed and convex for 
all $i=1,\ldots,N$. From Lemma $\ref{lem.demiclose}$, we see that $F(S_j)$ is also closed and convex for all $j=1,\ldots,M$. Thus, $F$ is 
closed and convex. From the definitions of $C_n$ and $Q_n$, we see that $Q_n$ is closed and convex and $C_n$ is closed. On the other hand, 
the relation $||\bar{u}_n - v||^2\leq ||x_n-v||^2+\epsilon_n$ is equivalent to 
$$ 2\left\langle v,x_n-\bar{u}_n\right\rangle\le ||x_n||^2-||\bar{u}_n||^2+\epsilon_n. $$
This implies that $C_n$ is convex. Putting $S_{j,\beta_n}=\beta_n I+(1-\beta_n) S_j^n $ and rewriting 
$u_n^j=\alpha_n x_n+(1-\alpha_n)S_{j,\beta_n}\bar{z}_n$. From the convexity of $||.||^2$ and the relation $(\ref{lem.aux})$, for each $u\in F$ we have
\begin{eqnarray}
||\bar{u}_n-u||^2&=&||\alpha_n x_n+(1-\alpha_n)S_{j_n,\beta_n}\bar{z}_n-u||^2\nonumber\\
&\le&\alpha_n||x_n-u||^2+(1-\alpha_n)||S_{j_n,\beta_n}\bar{z}_n-u||^2\nonumber\\
&=&\alpha_n||x_n-u||^2+(1-\alpha_n)||\beta_n \bar{z}_n+(1-\beta_n) S_{j_n}^n\bar{z}_n-u||^2\nonumber\\
&=&\alpha_n||x_n-u||^2\nonumber\\
&&+(1-\alpha_n)\left(\beta_n ||\bar{z}_n-u||^2+(1-\beta_n)||S_{j_n}^n\bar{z}_n-S_{j_n}^nu||^2\right)\nonumber\\
&&-(1-\alpha_n)\beta_n(1-\beta_n)||(\bar{z}_n-u)-(S_{j_n}^n\bar{z}_n-S_{j_n}^nu)||^2\nonumber\\
&=&\alpha_n||x_n-u||^2+(1-\alpha_n)\left(\beta_n ||\bar{z}_n-u||^2+(1-\beta_n)k_n||\bar{z}_n-u||^2\right)\nonumber\\
&&+\kappa(1-\alpha_n)(1-\beta_n)||(I-S_{j_n}^n)\bar{z}_n-(I-S_{j_n}^n)u||^2\nonumber\\
&&-(1-\alpha_n)\beta_n(1-\beta_n)||(\bar{z}_n-u)-(S_{j_n}^n\bar{z}_n-S_{j_n}^nu)||^2\nonumber\\
&=&\alpha_n||x_n-u||^2+(1-\alpha_n)\left(\beta_n ||\bar{z}_n-u||^2+(1-\beta_n)k_n||\bar{z}_n-u||^2\right)\nonumber\\
&&-(\beta_n-\kappa)(1-\alpha_n)(1-\beta_n)||(I-S_{j_n}^n)\bar{z}_n-(I-S_{j_n}^n)u||^2\nonumber\\
&\le&\alpha_n||x_n-u||^2+(1-\alpha_n)\left(\beta_n ||\bar{z}_n-u||^2+(1-\beta_n)k_n||\bar{z}_n-u||^2\right)\nonumber\\
&=&\alpha_n||x_n-u||^2+(1-\alpha_n)||\bar{z}_n-u||^2+(1-\beta_n)(k_n-1)||\bar{z}_n-u||^2\nonumber\\
&\le&\alpha_n||x_n-u||^2+(1-\alpha_n)||\bar{z}_n-u||^2+(k_n-1)||\bar{z}_n-u||^2. \label{eq:6}
\end{eqnarray}
From Lemma $\ref{lem.1}$, we obtain $||\bar{z}_n-u||\le||x_n-u||$. This together with $(\ref{eq:6})$ implies that
$$
\begin{array}{ll}
||\bar{u}_n-u||^2&\le\alpha_n||x_n-u||^2+(1-\alpha_n)||x_n-u||^2+(k_n-1)||x_n-u||^2\nonumber\\
&\le||x_n-u||^2+(k_n-1)\left(||x_n||+||u||\right)^2\nonumber\\
&=||x_n-u||^2+\epsilon_n.\nonumber
\end{array}
$$
Thus, $F\subset C_n$ for all $n\ge 0$. Now, we show $F\subset C_n\cap Q_n$ for all $n\ge 0$ by the induction. Indeed, $F\subset Q_0$ and so $F\subset C_0\cap Q_0$. Assume that $F\subset C_{n}\cap Q_{n}$ for some $n\ge 0$. From $x_{n+1}=P_{C_{n}\cap Q_n}x_0$ and Lemma $\ref{PropertyPC}$, we obtain
$$ \left\langle v-x_{n+1}, x_{n+1}-x_0\right\rangle\ge 0, \quad \forall v\in C_{n}\cap Q_{n}. $$
Since $F\subset C_n\cap Q_n$, $\left\langle v-x_{n+1}, x_{n+1}-x_0\right\rangle\ge 0$ for all $ v \in F$. This together with the definition of 
$Q_{n+1}$ implies that  $F\subset Q_{n+1}$. Thus, by the induction $F\subset C_n\cap Q_n$ for all $n\ge 0$. Since $F\ne \O$, $P_F x_0$ 
and $x_{n+1}=P_{C_n\cap Q_n}x_0$ are well-defined.
\end{proof}
\begin{lemma}\label{lem.3}
Suppose that $\left\{x_n\right\},\left\{y_n^i\right\}, \left\{z_n^i\right\}$ and $\left\{u_n^j\right\}$ generated by  Algorithm $\ref{Algor.1}$. Then there hold the relations
$$
\lim_{n\to \infty}||x_{n+1}-x_n||=\lim_{n\to \infty}||y_n^i-x_n||=\lim_{n\to \infty}||z_{n}^i-x_n||=\lim_{n\to \infty}||u_{n}^j-x_n||=0,
$$
and $\lim_{n\to \infty}||S_j x_{n}-x_n||=0$ for all $i=1,\ldots,N, j=1,\ldots,M$.
\end{lemma}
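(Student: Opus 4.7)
The plan is to ride the standard hybrid-method machinery, using $Q_n$ to control $\{x_n\}$ and $C_n$ together with Lemma 3.1 to squeeze out the inner differences.

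First I would note that the definition of $Q_n$ forces $x_n = P_{Q_n}(x_0)$, so that the inclusion $F\subset C_n\cap Q_n\subset Q_n$ (from Lemma 3.2) yields $\|x_n-x_0\|\le \|x^*-x_0\|$ for any fixed $x^*\in F$; this makes $\{x_n\}$ bounded, and by the boundedness hypothesis on $F$ it also makes $\epsilon_n=(k_n-1)(\|x_n\|+\omega)^2\to 0$. Next, the inclusion $x_{n+1}\in Q_n$ gives $\langle x_{n+1}-x_n,x_n-x_0\rangle\ge 0$, so expanding $\|x_{n+1}-x_0\|^2$ yields $\|x_{n+1}-x_0\|^2\ge\|x_{n+1}-x_n\|^2+\|x_n-x_0\|^2$. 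Hence $\{\|x_n-x_0\|\}$ is non-decreasing and bounded, so it converges and $\|x_{n+1}-x_n\|\to 0$. Then from $x_{n+1}\in C_n$ I get $\|\bar u_n-x_{n+1}\|^2\le\|x_n-x_{n+1}\|^2+\epsilon_n\to 0$, so $\|\bar u_n-x_n\|\to 0$, and by the ``furthest element'' choice of $j_n$, $\|u_n^j-x_n\|\le\|\bar u_n-x_n\|\to 0$ for every $j$.

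The crucial step is to push this decay down to $\bar z_n$. For this I combine Lemma 3.1 (applied at index $i_n$) with inequality (\ref{eq:6}) from the proof of Lemma 3.2, which after cancelling yields, for any $u\in F$,
\begin{equation*}
(1-\alpha_n)c\bigl(\|y_n^{i_n}-x_n\|^2+\|z_n^{i_n}-y_n^{i_n}\|^2\bigr)\le\|x_n-u\|^2-\|\bar u_n-u\|^2+(k_n-1)\|\bar z_n-u\|^2.
\end{equation*}
The right-hand side goes to zero because $\|\bar u_n-x_n\|\to 0$, $\{x_n\},\{\bar z_n\}$ are bounded, $k_n\to 1$, and $\limsup\alpha_n<1$. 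Hence $\|y_n^{i_n}-x_n\|\to 0$ and $\|z_n^{i_n}-y_n^{i_n}\|\to 0$, so $\|\bar z_n-x_n\|\to 0$, and by the choice of $i_n$ we get $\|z_n^i-x_n\|\to 0$ for every $i$. To obtain $\|y_n^i-x_n\|\to 0$ for every (not just the worst) $i$, I apply Lemma 3.1 once more at a generic index: $c\|y_n^i-x_n\|^2\le\|x_n-u\|^2-\|z_n^i-u\|^2\le(\|x_n-u\|+\|z_n^i-u\|)\|x_n-z_n^i\|$, and the right side vanishes.

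Finally I extract the fixed-point residual. From Step 4, $u_n^j-x_n=(1-\alpha_n)\bigl[\beta_n(\bar z_n-x_n)+(1-\beta_n)(S_j^n\bar z_n-x_n)\bigr]$; dividing by $1-\alpha_n$ (bounded away from $0$ by condition (a)) and using $\beta_n\le b<1$ from (b) together with $\|\bar z_n-x_n\|\to 0$ gives $\|S_j^n\bar z_n-x_n\|\to 0$. The uniform $L$-Lipschitz property from Lemma \ref{lem.demiclose}(iii) then yields $\|S_j^n x_n-x_n\|\le L\|x_n-\bar z_n\|+\|S_j^n\bar z_n-x_n\|\to 0$. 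To bridge from $S_j^n$ to $S_j$ I use the telescoping estimate
\begin{equation*}
\|S_jx_n-x_n\|\le L\|x_n-S_j^n x_n\|+L\|x_n-x_{n+1}\|+\|S_j^{n+1}x_{n+1}-x_{n+1}\|+\|x_{n+1}-x_n\|,
\end{equation*}
where the first term comes from writing $S_j^{n+1}x_n=S_j(S_j^n x_n)$ and applying Lipschitz continuity of $S_j$. Every term on the right tends to $0$, finishing the proof. The most delicate point I expect is keeping track of the extra $(k_n-1)\|\bar z_n-u\|^2$ term when combining the Lemma 3.1 bound with (\ref{eq:6}); the rest is routine propagation along the definitions.
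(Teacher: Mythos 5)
Your proposal is correct and follows essentially the same route as the paper's own proof: the monotonicity of $\|x_n-x_0\|$ via $Q_n$, the $C_n$-membership of $x_{n+1}$ combined with $\epsilon_n\to 0$ to kill $\|\bar u_n-x_n\|$, the combination of Lemma \ref{lem.1} with the estimate $(\ref{eq:6})$ at the index $i_n$, propagation to all indices via the argmax choices, and the same telescoping bound to pass from $S_j^n$ to $S_j$. The only differences are cosmetic (you keep the term $(k_n-1)\|\bar z_n-u\|^2$ explicit rather than absorbing it into $\epsilon_n$, and you spell out the generic-index argument for $\|y_n^i-x_n\|\to 0$ that the paper only sketches), so no further comment is needed.
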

\begin{proof}
We have $x_n=P_{Q_n}x_0$ and $F\subset Q_n$. For each $u\in F$, from Lemma $\ref{PropertyPC}$, we have
\begin{equation}\label{eq:8}
||x_n-x_0||\le ||u-x_0||, \quad \forall n\ge 0.
\end{equation}
Thus, the sequence $\left\{||x_n-x_0||\right\}$, and so $\left\{x_n\right\}$, are bounded. From $x_{n+1}\in Q_n$ and $x_n=P_{Q_n}x_0$, we also obtain
$
||x_n-x_0||\le ||x_{n+1}-x_0|| 
$
for all $n\ge 0$. This implies that the sequence $\left\{||x_n-x_0||\right\}$ is nondecreasing. Therefore, there exists the limit of the sequence $\left\{||x_n-x_0||\right\}$. Moreover, from $x_{n+1}\in Q_n$ and $x_n=P_{Q_n}x_0$, we get
$$
||x_n-x_{n+1}||^2\le ||x_{n+1}-x_0||^2-||x_n-x_0||^2.
$$
From this inequality, letting $n\to\infty$, we find
\begin{equation}\label{eq:11}
\lim_{n\to\infty}||x_n-x_{n+1}||=0.
\end{equation}
From the definition of $C_{n}$ and $x_{n+1}\in C_n$, we have
\begin{equation}\label{eq:12}
||\bar{u}_n-x_{n+1}||^2\le ||x_n-x_{n+1}||^2+\epsilon_n.
\end{equation}
From the boundedness of $\left\{x_n\right\}$, we find that $\epsilon_n=(k_n-1)(||x_n||+\omega)^2\to 0$ as $n\to \infty$. This together with the relations $(\ref{eq:11}),(\ref{eq:12})$ implies that
$
||\bar{u}_n-x_{n+1}||\to 0.
$
Thus,
\begin{equation}\label{eq:14}
\lim_{n\to\infty}||\bar{u}_n-x_{n}||=0.
\end{equation}
From the definition of $j_n$, we get
$
\lim_{n\to\infty}||u_n^j-x_{n}||=0, j=1,\ldots, M.
$
From $(\ref{eq:6})$ and Lemma $\ref{lem.1}$, we have
\begin{eqnarray*}
||\bar{u}_n-u||^2&\le&\alpha_n||x_n-u||^2+(1-\alpha_n)||\bar{z}_n-u||^2+(1-\beta_n)(k_n-1)||\bar{z}_n-u||^2\\
&\le&\alpha_n||x_n-u||^2+(1-\alpha_n)||\bar{z}_n-u||^2+(k_n-1)||x_n-u||^2\\
&\le&||x_n-u||^2-c(1-\alpha_n)\left(\left\|y_n^{i_n}-x_n\right\|^2+\left\|\bar{z}_n-y_n^{i_n}\right\|^2\right)+\epsilon_n.
\end{eqnarray*}
Hence,
\begin{equation}\label{eq:16}
c(1-\alpha_n)\left(\left\|y_n^{i_n}-x_n\right\|^2+\left\|\bar{z}_n-y_n^{i_n}\right\|^2\right)\le||x_n-u||^2-||\bar{u}_n-u||^2+\epsilon_n.
\end{equation}
Moreover, 
\begin{eqnarray*}
\left|||x_n-u||^2-||\bar{u}_n-u||^2\right|&=&\left|||x_n-u||-||\bar{u}_n-u||\right|\left(||x_n-u||+||\bar{u}_n-u||\right)\\
&\le& ||x_n-\bar{u}_n||\left(||x_n-u||+||\bar{u}_n-u||\right).
\end{eqnarray*}
The last inequality together with $(\ref{eq:14})$ and the boundedness of the sequences $\left\{x_n\right\}, \left\{\bar{u}_n\right\}$ implies that
\begin{equation}\label{eq:17}
\lim_{n\to\infty}\left(||x_n-u||^2-||\bar{u}_n-u||^2\right)=0.
\end{equation}
From $(\ref{eq:16}),(\ref{eq:17})$ and $\lim\sup_{n\to\infty}\alpha_n<1$, $\epsilon_n\to 0$ we obtain
\begin{equation}\label{eq:18}
\lim_{n\to\infty}\left\|y_n^{i_n}-x_n\right\|=\lim_{n\to\infty}\left\|\bar{z}_n-y_n^{i_n}\right\|=0.
\end{equation}
Thus,
\begin{equation}\label{eq:19}
\lim_{n\to\infty}\left\|\bar{z}_n-x_n\right\|=0
\end{equation}
because of $\left\|\bar{z}_n-x_n\right\|\le\left\|\bar{z}_n-y_n^{i_n}\right\|+\left\|y_n^{i_n}-x_n\right\|$. 
From the definition of $i_n$, we obtain
\begin{equation}\label{eq:20}
\lim_{n\to\infty}\left\|z^i_n-x_n\right\|=0, i=1,\ldots,N.
\end{equation}
From Lemma $\ref{lem.1}$ and $(\ref{eq:20})$, by arguing similarly as $(\ref{eq:18})$ one has
$$
\lim_{n\to\infty}\left\|y^i_n-x_n\right\|=0, i=1,\ldots,N.
$$
From $u_n^j=\alpha_n x_n+(1-\alpha_n)S_{j,\beta_n}\bar{z}_n$ we get $||u_n^j-x_n||=(1-\alpha_n)||S_{j,\beta_n}\bar{z}_n-x_n||$. 
By $\lim\sup_{n\to\infty}\alpha_n<1$ and $||u_n^j-x_n||\to 0$ we obtain
\begin{equation}\label{eq:22}
\lim_{n\to\infty}||S_{j,\beta_n}\bar{z}_n-x_n||=0.
\end{equation}
The relations $(\ref{eq:19})$ and $(\ref{eq:22})$ lead to
$
\lim_{n\to\infty}||S_{j,\beta_n}\bar{z}_n-\bar{z}_n||=0.
$
 Thus, from the definition of $S_{j,\beta_n}$, one has
$
\lim_{n\to\infty}(1-\beta_n)||S_j^n\bar{z}_n-\bar{z}_n||=0.
$
Since $\beta_n\le b<1$,
\begin{equation}\label{eq:24}
\lim_{n\to\infty}||S_j^n\bar{z}_n-\bar{z}_n||=0.
\end{equation}
From Lemma $\ref{lem.demiclose}$, $S_j$ is uniformly L - Lipschitz continuous. Therefore
\begin{eqnarray*}
||S_j^nx_n-x_n||&\le& ||S_j^nx_n-S_j^n\bar{z}_n||+||S_j^n\bar{z}_n-\bar{z}_n||+||\bar{z}_n-x_n||\\ 
&\le& L||x_n-\bar{z}_n||+||S_j^n\bar{z}_n-\bar{z}_n||+||\bar{z}_n-x_n||\\ 
&\le& (L+1)||x_n-\bar{z}_n||+||S_j^n\bar{z}_n-\bar{z}_n||.
\end{eqnarray*}
The last inequality together with $(\ref{eq:19}), (\ref{eq:24})$, we have
\begin{equation}\label{eq:25}
\lim_{n\to\infty}||S_j^nx_n-x_n||=0, j=1,\ldots,M.
\end{equation}
On the other hand,
\begin{eqnarray*}
||S_{j}x_n-x_n||&\le&||S_{j}x_n-S_j^{n+1}x_n||+||S_{j}^{n+1}x_n-S_j^{n+1}x_{n+1}||\\
&&+||S_{j}^{n+1}x_{n+1}-x_{n+1}||+||x_{n+1}-x_n||\\ 
&\le&L||x_n-S_j^n x_n|| +(L+1)||x_{n}-x_{n+1}||+||S_{j}^{n+1}x_{n+1}-x_{n+1}||.
\end{eqnarray*}
This together with $(\ref{eq:12})$ and $(\ref{eq:25})$ implies that
\begin{equation}\label{eq:26}
\lim_{n\to\infty}||S_{j}x_n-x_n||=0.
\end{equation}
The proof of Lemma $\ref{lem.3}$ is complete.
\end{proof}
\begin{lemma}\label{lem.4}
Suppose that $p$ is a weak cluster point of the sequence $\left\{x_n\right\}$. Then $p\in F$.
\end{lemma}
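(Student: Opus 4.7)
The plan is to extract a subsequence $\{x_{n_k}\}$ with $x_{n_k} \rightharpoonup p$ and show separately that $p$ belongs to $\cap_{j=1}^M F(S_j)$ and to $\cap_{i=1}^N VI(A_i,K_i)$. All the necessary ``zero-residual'' information is packed into Lemma $\ref{lem.3}$: $\|S_j x_n - x_n\|\to 0$, $\|y_n^i - x_n\|\to 0$, $\|z_n^i - x_n\|\to 0$, and $\|x_{n+1}-x_n\|\to 0$. Since $\|y_n^i - x_n\|\to 0$ and $\{x_{n_k}\}$ converges weakly to $p$, each subsequence $\{y_{n_k}^i\}$ also converges weakly to $p$; because $y_{n_k}^i \in K_i$ and $K_i$ is weakly closed, this already yields $p \in K_i$ for every $i$, hence $p \in K$.

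For the fixed point part, I would fix $j \in \{1,\ldots,M\}$, note that $(I-S_j)x_{n_k}\to 0$ strongly by $(\ref{eq:26})$, and directly invoke the demiclosedness of $I - S_j$ at $0$ stated in Lemma $\ref{lem.demiclose}(ii)$ to conclude $(I-S_j)p = 0$, i.e.\ $p \in F(S_j)$. Doing this for each $j$ gives $p \in \cap_{j=1}^M F(S_j)$.

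The more delicate piece is $p \in VI(A_i,K_i)$, which I plan to establish via the maximal monotone operator $Q_i$ from Lemma $\ref{lem:N-Normal-Cone}$, whose zero set is exactly $VI(A_i,K_i)$. Pick any $(v,w)$ in the graph of $Q_i$, so $v\in K_i$ and $w - A_i v \in N_{K_i}(v)$, which gives $\langle v - y, w - A_i v\rangle \ge 0$ for all $y \in K_i$; in particular with $y = y_{n_k}^i$. The projection characterization of $y_{n_k}^i = P_{K_i}(x_{n_k} - \lambda A_i x_{n_k})$ applied at $v\in K_i$ reads
\begin{equation*}
\bigl\langle v - y_{n_k}^i,\, y_{n_k}^i - x_{n_k} + \lambda A_i x_{n_k}\bigr\rangle \ge 0.
\end{equation*}
Combining these two inequalities and using monotonicity of $A_i$ to drop the term $\langle v - y_{n_k}^i, A_i v - A_i y_{n_k}^i\rangle \ge 0$, I obtain
\begin{equation*}
\langle v - y_{n_k}^i, w\rangle \;\ge\; \bigl\langle v - y_{n_k}^i, A_i y_{n_k}^i - A_i x_{n_k}\bigr\rangle - \tfrac{1}{\lambda}\bigl\langle v - y_{n_k}^i, y_{n_k}^i - x_{n_k}\bigr\rangle.
\end{equation*}
Passing to the limit $k\to\infty$, the right-hand side vanishes because $\|y_{n_k}^i - x_{n_k}\|\to 0$, $\|A_i y_{n_k}^i - A_i x_{n_k}\|\le L\|y_{n_k}^i - x_{n_k}\|\to 0$, and $\{v - y_{n_k}^i\}$ is bounded, so $\langle v - p, w\rangle \ge 0$. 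Since $(v,w)$ was arbitrary in the graph of the maximal monotone $Q_i$, this forces $0 \in Q_i(p)$, i.e.\ $p \in VI(A_i,K_i)$.

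The only delicate step I anticipate is making sure the Lipschitz-monotonicity manipulation above is written cleanly enough that the limit argument goes through for every $i$ simultaneously; everything else is a direct application of Lemma $\ref{lem.3}$, Lemma $\ref{lem.demiclose}(ii)$, and Lemma $\ref{lem:N-Normal-Cone}$. Repeating the argument for $i = 1,\ldots,N$ and combining with the fixed-point conclusion yields $p \in \bigl(\cap_{i=1}^N VI(A_i,K_i)\bigr)\cap \bigl(\cap_{j=1}^M F(S_j)\bigr) = F$.
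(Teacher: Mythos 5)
Your proposal is correct and follows essentially the same route as the paper: demiclosedness of $I-S_j$ for the fixed-point part, and the maximal monotone operator $Q_i$ from Lemma \ref{lem:N-Normal-Cone} combined with the projection characterization of $y_n^i$ and the monotonicity of $A_i$ for the variational-inequality part. The key inequality you derive and the limit passage are exactly those in the paper's proof.
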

\begin{proof}
Since $\left\{x_n\right\}$ is bounded, there exists a subsequence of $\left\{x_n\right\}$ weakly converging to $p$. For the sake of simplicity, we denote this subsequence again by $\left\{x_n\right\}$. From $(\ref{eq:26})$ and the demicloseness of $S_j$, we obtain $p\in F(S_j)$. So, $p\in \cap_{j=1}^M F(S_j)$.

Now we prove that $p\in \cap_{i=1}^N VI(A_i,K_i)$. Indeed, for each 
operator $A_i$, we define the mapping $Q_i$ by
$$
Q_i(x)=\left \{
\begin{array}{ll}
&A_ix+N_{K_i}(x)\quad\mbox{ if }\quad x\in K_i\\
&\O \quad\quad\quad\quad\quad\quad \mbox{if}\quad x\notin K_i,
\end{array}
\right.
$$
where $N_{K_i}(x)$ is the normal cone to  $K_i$ at $x\in K_i$. Lemma $\ref{lem:N-Normal-Cone}$ ensures that $Q_i$ is maximal monotone. 
For each pair $(x,y)$ in the graph of $Q_i$, i.e., $(x,y)\in G(Q_i)$, from the definition of the mapping $Q_i$ we see that 
$y-A_i(x)\in N_{K_i}(x)$. Therefore,
$$\left\langle x-z,y-A_i(x)\right\rangle\ge 0$$
for all $z\in K_i$ because of the definition of $N_{K_i}(x)$. Substituting $z=y_n^i\in K_i$ onto the last inequality, one gets
$
\left\langle x-y_n^i,y-A_i(x)\right\rangle\ge 0.
$
Therefore,
\begin{equation}\label{eq:24}
\left\langle x-y_n^i,y\right\rangle\ge\left\langle x-y_n^i,A_i(x)\right\rangle.
\end{equation}
By $y_n^i=P_{K_i}\left(x_n-\lambda A_i x_n\right)$ and Lemma $\ref{PropertyPC}$(ii),
$\left\langle x-y_n^i,y_n^i-x_n+\lambda A_i x_n\right\rangle\ge 0.$
A straightforward computation yields
\begin{equation}\label{eq:25}
\left\langle x-y_n^i,A_i(x_n)\right\rangle \ge \left\langle x-y_n^i,\frac{x_n-y_n^i}{\lambda}\right\rangle.
\end{equation}
Thus, from $(\ref{eq:24}),(\ref{eq:25})$ and the monotonicity of $A_i$, we find that
\begin{eqnarray}
\left\langle x-y_n^i,y\right\rangle&\ge&\left\langle x-y_n^i,A_i(x)\right\rangle\nonumber\\ 
&=& \left\langle x-y_n^i,A_i(x)-A_i(y_n^i)\right\rangle+\left\langle x-y_n^i,A_i(y_n^i)-A_i(x_n)\right\rangle\nonumber\\
&&+\left\langle x-y_n^i,A_i(x_n)\right\rangle\nonumber\\
&\ge& \left\langle x-y_n^i,A_i(y_n^i)-A_i(x_n)\right\rangle+\left\langle x-y_n^i,\frac{x_n-y_n^i}{\lambda}\right\rangle. \label{eq:26}
\end{eqnarray}
From the Lipschitz-continuity of $A_i$ and $||x_n-y_n^i||\to 0$, 
\begin{equation}\label{eq:28}
\lim_{n\to\infty}||A_i(y_n^i)-A_i(x_n)||=0.
\end{equation}
Since $||x_n-y_n^i||\to 0$ and $x_n\rightharpoonup p$, $y_n^i\rightharpoonup p$. 
Passing the limit in inequality $(\ref{eq:26})$ as $n\to\infty$ and employing $(\ref{eq:28})$, 
we obtain $\left\langle x-p,y\right\rangle\ge 0$ for all $(x,y)\in G(Q_i)$. Thus, from the maximal monotonicity of $Q_i$ and 
Lemma $\ref{lem:N-Normal-Cone}$, one has 
$p\in Q_i^{-1}0=VI(A_i,K_i)$ for all $1\le i\le N$. Hence, $p\in F$. The proof of Lemma $\ref{lem.4}$ is complete.
\end{proof}
\begin{theorem}\label{theo.1}
Let $K_i,i=1,\ldots,N$ be closed and convex subsets of real Hilbert space $H$ such that $K=\cap_{i=1}^N K_i \ne \O$. 
Suppose that $\left\{A_i\right\}_{i=1}^N:H \to H$ is a finite family of monotone and $L$ - Lipschitz continuous mappings 
and $\left\{S_j\right\}_{i=1}^M:H \to H$ is a finite family of asymptotically $\kappa$ - strict pseudocontractive mappings. 
In addition, the set $F$ is nonempty and bounded. Then, the sequences $\left\{x_n\right\}, \left\{y_n^i\right\}, \left\{z^i_n\right\}$ 
and $\left\{u^j_n\right\}$ generated by Algorithm $\ref{Algor.1}$ converge strongly to $P_F x_0$.
\end{theorem}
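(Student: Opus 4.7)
The plan is to parlay the four lemmas already established into the standard hybrid-method endgame. Since Lemma~\ref{lem.2} yields $F\subset C_n\cap Q_n$ and the well-definedness of $x_{n+1}=P_{C_n\cap Q_n}x_0$, and since Lemma~\ref{lem.3} already supplies $\|x_{n+1}-x_n\|\to 0$, $\|y_n^i-x_n\|\to 0$, $\|z_n^i-x_n\|\to 0$, $\|u_n^j-x_n\|\to 0$, and $\|S_j x_n-x_n\|\to 0$, it suffices to prove the strong convergence of $\{x_n\}$ to $P_F x_0$; the other four sequences will then inherit the same strong limit from the triangle inequality.

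The first observation to extract is that $\{x_n\}$ is bounded (from $(\ref{eq:8})$ in Lemma~\ref{lem.3}, taking $u=P_F x_0$), so the Eberlein--\v{S}mulian theorem yields a weakly convergent subsequence $x_{n_k}\rightharpoonup p$. By Lemma~\ref{lem.4}, any such weak cluster point $p$ lies in $F$. Next I would identify $p$ uniquely: specializing $(\ref{eq:8})$ to $u=P_F x_0$ gives $\|x_n-x_0\|\le\|P_F x_0-x_0\|$ for every $n$. Combining this with the weak lower semicontinuity of $\|\cdot-x_0\|$ and $p\in F$ yields
\[
\|p-x_0\|\le\liminf_{k\to\infty}\|x_{n_k}-x_0\|\le\|P_F x_0-x_0\|,
\]
so the uniqueness of the nearest point in $F$ to $x_0$ forces $p=P_F x_0$. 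Because every weakly convergent subsequence of the bounded sequence $\{x_n\}$ must converge weakly to the same point $P_F x_0$, I conclude $x_n\rightharpoonup P_F x_0$.

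To upgrade weak to strong convergence, I would push the norm estimates one step further. The monotone nondecreasing sequence $\{\|x_n-x_0\|\}$ (established in Lemma~\ref{lem.3}) is bounded above by $\|P_F x_0-x_0\|$, so it converges; combining its weak lower semicontinuity bound $\|P_F x_0-x_0\|\le\liminf\|x_n-x_0\|$ with the reverse inequality $\|x_n-x_0\|\le\|P_F x_0-x_0\|$ pins the limit to $\|P_F x_0-x_0\|$. Then the Hilbert-space identity
\[
\|x_n-P_F x_0\|^2=\|x_n-x_0\|^2-2\left\langle x_n-x_0,P_F x_0-x_0\right\rangle+\|P_F x_0-x_0\|^2
\]
has its first term converging to $\|P_F x_0-x_0\|^2$ and, by $x_n\rightharpoonup P_F x_0$, its inner-product term converging to $\|P_F x_0-x_0\|^2$, yielding $\|x_n-P_F x_0\|\to 0$.

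Finally, strong convergence of $\{y_n^i\}$, $\{z_n^i\}$, $\{u_n^j\}$ to the same limit follows immediately from Lemma~\ref{lem.3} and the triangle inequality. The step I expect to be the most delicate is the uniqueness argument identifying every weak cluster point as $P_F x_0$: it requires the simultaneous use of the projection minimality $x_n=P_{Q_n}x_0$, the containment $P_F x_0\in F\subset Q_n$ (i.e., Lemma~\ref{lem.2}), and the demiclosedness/maximal-monotonicity machinery of Lemma~\ref{lem.4}, all of which must work in concert; the remainder of the proof is routine Hilbert-space bookkeeping.
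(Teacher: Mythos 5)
Your proposal is correct and follows essentially the same route as the paper: boundedness from $(\ref{eq:8})$, identification of every weak cluster point with $P_F x_0$ via Lemma~\ref{lem.4} and weak lower semicontinuity of the norm, and then an upgrade from weak to strong convergence using convergence of $\|x_n-x_0\|$. The only cosmetic difference is that you expand $\|x_n-P_Fx_0\|^2$ explicitly where the paper invokes the Kadec--Klee property of $H$; these are the same argument.
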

\begin{proof}
By Lemma $\ref{lem.2}$, $F, C_n, Q_n$ are nonempty closed and convex subsets of $H$. Besides, $F\subset C_n\cap Q_n$ for all $n\ge 0$. 
Therefore, $P_F x_0, P_{C_n \cap Q_n} x_0$ are well-defined. From Lemma $\ref{lem.3}$, $\left\{x_n\right\}$ is bounded. Assume that $p$ 
is any weak cluster point of $\left\{x_n\right\}$ and $x_{n_j}\rightharpoonup p$. By Lemma $\ref{lem.4}$, $p\in F$. We now show that 
$x_n\to P_{F}x_0$. Indeed, setting $x^\dagger:=P_{F}x_0$, from $(\ref{eq:8})$ and $x^\dagger\in F$ we have
$
||x_n-x_0||\le||x^\dagger-x_0||
$
for all $n\ge 0$. This together with the lower weak semicontinuity of the norm implies that
$$
||p-x_0||\le\lim\inf_{j\to \infty}||x_{n_j}-x_0||\le \lim\sup_{j\to \infty}||x_{n_j}-x_0|| \le||x^\dagger-x_0||.
$$
So, by the definition of $x^\dagger$, $p=x^\dagger$ and $\lim_{j\to \infty}||x_{n_j}-x_0|| =||x^\dagger-x_0||$. Therefore, $\lim_{j\to \infty}||x_{n_j}|| =||x^\dagger||$. From the Kadec-Klee property of the Hilbert space $H$, $x_{n_j}\to x^\dagger$. Thus, $x_n\to x^\dagger$. Lemma $\ref{lem.4}$ ensures that the sequences $\left\{y_n^i\right\},\left\{z_n^i\right\}, \left\{u_n^j\right\}$ also converge strongly to $P_F x_0$. The proof of Theorem $\ref{theo.1}$ is complete.
\end{proof}
Next, we consider CSVIP for the monotone and Lipschitz continuous operators $\left\{A_i\right\}_{i=1}^N$ involving a finite family of 
$\kappa$ - strict pseudocontractive mappings $\left\{S_j\right\}_{i=1}^M$. In Algorithm $\ref{Algor.2}$ below, the assumption of the 
boundedness of the solution set $F$ is not required. We have the following algorithm whose idea is similar to the Algorithm $\ref{Algor.1}$.
\begin{algorithm}\label{Algor.2}
 \textbf{Initialization:} $x_0\in H, n:=0$. The control parameter sequences $\left\{\alpha_k\right\},\left\{\beta_k\right\}$ satisfy the following conditions
\begin{itemize}
\item [$(a)$] $0\le\alpha_k<1$, $\lim_{k\to\infty}\sup\alpha_k<1$;
\item [$(b)$] $\kappa\le\beta_k\le b<1$ for some $b\in (\kappa;1)$;
\item [$(c)$] $0<\lambda<1/L$.
\end{itemize}
\textbf{Step 1.} Find $N$ the projections $y_n^i$ on $K_i$ in parallel
\begin{eqnarray*}
y_n^i=P_{K_i}(x_n-\lambda A_i(x_n)), i=1,\ldots,N.
\end{eqnarray*}
\textbf{Step 2.} Find $N$ the projections $z_n^i$ on the half-space $T_n^i$ in parallel
\begin{eqnarray*}
z_n^i=P_{T_n^i}(x_n-\lambda A_i(y_n^i)), i=1,\ldots,N,
\end{eqnarray*}
where $T_n^i$ is defined as in Algorithm $\ref{Algor.1}$.\\
\textbf{Step 3.} Find the furthest element from $x_n$ among all $z_n^i$, i.e., 
$$ i_n = {\rm argmax}\{||z_n^i - x_n||: i =1,\ldots,N\},\bar{z}_n:=z^{i_n}_n. $$
\textbf{Step 4.} Find intermediate approximations $u_n^j$ in parallel
$$ u_n^j=\alpha_n x_n+(1-\alpha_n)\left(\beta_n\bar{z}_n+(1-\beta_n) S_j\bar{z}_n\right), j=1,\ldots,M. $$
\textbf{Step 5.} Find the furthest element from $x_n$ among all $u_n^j$, i.e., 
$$ j_n= {\rm argmax}\{||u_n^j - x_n||: j =1,\ldots,M\},\bar{u}_n:=u^{j_n}_n. $$
\textbf{Step 6.} Compute $ x_{n+1}= P_{C_{n}\cap Q_n}(x_0)$ where 
$C_{n} = \{v \in H: ||\bar{u}_n - v||\leq ||x_n-v||\},
Q_{n}= \{v \in H: \left\langle v-x_n,x_n-x_0 \right\rangle\ge 0\}$. Set $n:=n+1$ and go back \textbf{Step 1}.
\end{algorithm}
We also have the following result.
\begin{theorem}\label{theo.2}
Let $K_i,i=1,\ldots,N$ be closed and convex subsets of real Hilbert space $H$ such that $K=\cap_{i=1}^N K_i \ne \O$. Suppose that $\left\{A_i\right\}_{i=1}^N:H \to H$ is a finite family of monotone and $L$ - Lipschitz continuous mappings and $\left\{S_j\right\}_{i=1}^M:H \to H$ is a finite family of $\kappa$ - strict pseudocontractive mappings. In addition, the set $F$ is nonempty. Then, the sequences $\left\{x_n\right\}, \left\{y_n^i\right\}, \left\{z^i_n\right\}$ and $\left\{u^j_n\right\}$ generated by Algorithm $\ref{Algor.2}$ converge strongly to $P_F x_0$.
\end{theorem}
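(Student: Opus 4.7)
The plan is to mirror the proof of Theorem~\ref{theo.1} step-by-step, exploiting the fact that in the non-asymptotic case we may take $k_n\equiv 1$, which both removes the need for the error term $\epsilon_n$ and eliminates the hypothesis that $F$ is bounded. Throughout, Lemma~\ref{lem.1} applies verbatim to Algorithm~\ref{Algor.2}, since its proof only uses the properties of $y_n^i,z_n^i$ and the monotonicity and Lipschitz continuity of $A_i$, none of which has changed.

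First I would establish the analogue of Lemma~\ref{lem.2}: $F\subset C_n\cap Q_n$ for every $n\ge 0$, so that $x_{n+1}=P_{C_n\cap Q_n}x_0$ is well defined. Closedness and convexity of $C_n$ and $Q_n$, and of $F$, follow exactly as before (using Lemma~\ref{lem:T2000} and Lemma~\ref{lem.demiclose}(i) and the rewriting of $C_n$ as a half-space condition). The crucial estimate is: for $u\in F$, set $S_{j,\beta_n}=\beta_n I+(1-\beta_n)S_j$; then by (\ref{lem.aux}) and the $\kappa$-strict pseudocontractive inequality,
\begin{align*}
\|S_{j,\beta_n}\bar z_n-u\|^2 &= \beta_n\|\bar z_n-u\|^2+(1-\beta_n)\|S_j\bar z_n-u\|^2-\beta_n(1-\beta_n)\|(I-S_j)\bar z_n\|^2\\
&\le \|\bar z_n-u\|^2-(1-\beta_n)(\beta_n-\kappa)\|(I-S_j)\bar z_n\|^2\le \|\bar z_n-u\|^2,
\end{align*}
using $\beta_n\ge\kappa$. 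Convexity of $\|\cdot\|^2$ then gives $\|\bar u_n-u\|^2\le\alpha_n\|x_n-u\|^2+(1-\alpha_n)\|\bar z_n-u\|^2$, and Lemma~\ref{lem.1} yields $\|\bar z_n-u\|\le\|x_n-u\|$. Hence $\|\bar u_n-u\|\le\|x_n-u\|$, so $u\in C_n$; induction on $n$ through the definition of $Q_{n+1}$ (via Lemma~\ref{PropertyPC}(ii)) completes the claim.

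Next I would reprove the analogue of Lemma~\ref{lem.3}. The bound $\|x_n-x_0\|\le\|u-x_0\|$ gives boundedness of $\{x_n\}$; monotonicity of $\{\|x_n-x_0\|\}$ and $x_{n+1}\in Q_n$ give $\|x_{n+1}-x_n\|\to 0$; then $x_{n+1}\in C_n$ (without $\epsilon_n$) immediately gives $\|\bar u_n-x_n\|\to 0$, hence $\|u_n^j-x_n\|\to 0$ by the definition of $j_n$. Feeding this back through the estimate from the previous paragraph produces
\[
c(1-\alpha_n)\bigl(\|y_n^{i_n}-x_n\|^2+\|\bar z_n-y_n^{i_n}\|^2\bigr)\le \|x_n-u\|^2-\|\bar u_n-u\|^2,
\]
whose right-hand side vanishes thanks to $\|\bar u_n-x_n\|\to 0$; using $\limsup\alpha_n<1$ one obtains $\|\bar z_n-x_n\|\to 0$, then $\|z_n^i-x_n\|\to 0$ by the definition of $i_n$, and $\|y_n^i-x_n\|\to 0$ for each $i$. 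From $u_n^j=\alpha_n x_n+(1-\alpha_n)S_{j,\beta_n}\bar z_n$ and $\beta_n\le b<1$, one successively extracts $\|S_{j,\beta_n}\bar z_n-\bar z_n\|\to 0$ and $\|S_j\bar z_n-\bar z_n\|\to 0$; combined with $\|x_n-\bar z_n\|\to 0$ and $L$-Lipschitz continuity of $S_j$ (which now holds trivially since $S_j$ is itself $L$-Lipschitz, without invoking Lemma~\ref{lem.demiclose}(iii) in its asymptotic form), this gives $\|S_j x_n-x_n\|\to 0$. Note that here no concatenation across iterates of the form $S_j^{n+1}$ is required, so the argument is strictly simpler than in the asymptotic case.

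Finally I would reprove Lemma~\ref{lem.4}: for any weakly convergent subsequence $x_{n_k}\rightharpoonup p$, the demiclosedness of $I-S_j$ (Lemma~\ref{lem.demiclose}(ii), applied with $k_n\equiv 1$) and $\|S_j x_n-x_n\|\to 0$ yield $p\in F(S_j)$ for each $j$; while the normal-cone/maximal-monotonicity argument based on Lemma~\ref{lem:N-Normal-Cone}, using $\|y_n^i-x_n\|\to 0$ and Lipschitz continuity of $A_i$, gives $p\in VI(A_i,K_i)$ for each $i$, hence $p\in F$. The strong convergence to $P_F x_0$ is then standard: setting $x^\dagger:=P_F x_0$, the bound $\|x_n-x_0\|\le\|x^\dagger-x_0\|$ together with weak lower semicontinuity of the norm forces $p=x^\dagger$ and $\|x_{n_k}\|\to\|x^\dagger\|$, whence $x_{n_k}\to x^\dagger$ by the Kadec--Klee property; uniqueness of the weak cluster point promotes this to $x_n\to x^\dagger$, and the corresponding strong convergence of $y_n^i,z_n^i,u_n^j$ follows from the vanishing differences proved earlier. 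The main thing to be careful about is the self-improving step $F\subset C_n$, which only works because $\beta_n\ge\kappa$ is tight enough to absorb the $\kappa$-pseudocontractive defect without an error term — everything else is a genuine specialization of the Theorem~\ref{theo.1} argument.
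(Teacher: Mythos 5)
Your proposal is correct and follows exactly the paper's route: the paper's own proof of Theorem \ref{theo.2} simply observes that a $\kappa$-strict pseudocontractive mapping is asymptotically $\kappa$-strict pseudocontractive with $k_n\equiv 1$, sets $\epsilon_n=0$, and repeats the argument of Theorem \ref{theo.1}. You have merely written out that specialization in detail (correctly, including the observation that the boundedness of $F$ was needed only to define $\epsilon_n$).
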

\begin{proof}
Since $S_j$ is $\kappa$ - strict pseudocontractive mapping, $S_j$ is asymptotically $\kappa$ - strict pseudocontractive mapping with $k_n=1$ 
for all $n\ge 0$. Putting $\epsilon_n=0$, by arguing similarly as in the proof of Theorem $\ref{theo.1}$ we come to the desired conclusion.
\end{proof}
Using Theorem $\ref{theo.2}$, we obtain the following corollary.
\begin{corollary}\label{cor1}
Let $H$ be a real Hilbert space and $K_i,i=1,\ldots,N$ be closed convex subsets of $H$ such that $K=\cap_{i=1}^N K_i \ne \O$. Suppose that $A_i:H \to H$ is a monotone and $L$ - Lipschitz continuous mapping for each $i=1,2,\ldots,N$. In addition, the solution set $F=\cap_{i=1}^N VI(A_i,K_i)$ is nonempty. Let $\left\{x_n\right\}, \left\{y_n^i\right\}, \left\{z^i_n\right\}$ be the sequences generated by the following manner:
$$
\left\{
\begin{array}{ll}
&x_0\in H,\\ 
&y_n^i=P_{K_i}(x_n-\lambda A_i(x_n)), i=1,2,\ldots,N,\\
&z_n^i=P_{T^i_n}(x_n-\lambda A_i(y_n^i)), i=1,2,\ldots,N,\\
&i_n=\arg\max\left\{||z_n^i-x_n||:i=1,2,\ldots,N\right\}, \bar{z}_n=z_n^{i_n},\\
&C_{n} = \{v \in H: ||\bar{z}_n - v||\leq ||x_n-v||\},\\
&Q_{n}= \{v \in H: \left\langle v-x_n,x_n-x_0 \right\rangle\ge 0\},\\
& x_{n+1}= P_{C_{n}\cap Q_n}(x_0),n\ge 0,
\end{array}
\right.
$$
where $0<\lambda<1/L$ and $T_n^i$ is defined as in Algorithm $\ref{Algor.1}$. Then, the sequences $\left\{x_n\right\}, \left\{y_n^i\right\}, \left\{z^i_n\right\}$ converge strongly to $P_Fx_0$.
\end{corollary}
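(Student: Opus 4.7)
The plan is to obtain Corollary \ref{cor1} as a direct specialization of Theorem \ref{theo.2} by choosing a trivial family of strict pseudocontractions. Specifically, I would take $M = 1$ and $S_1 = I$, the identity map on $H$. Then $I$ is $\kappa$-strict pseudocontractive with $\kappa = 0$ (since $||Ix-Iy||^2 = ||x-y||^2$) and $F(I) = H$, so the combined solution set appearing in Theorem \ref{theo.2} collapses to $F = \cap_{i=1}^N VI(A_i,K_i) \cap H = \cap_{i=1}^N VI(A_i,K_i)$, which is exactly the target set of the corollary.

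Next, to force the $\bar{u}_n$-step of Algorithm \ref{Algor.2} to coincide with $\bar{z}_n$, I would choose $\alpha_n = 0$ for all $n$ (which satisfies $0 \le \alpha_n < 1$ and $\lim\sup_n \alpha_n = 0 < 1$) together with any $\beta_n$ meeting $0 \le \beta_n \le b < 1$, for instance $\beta_n = 1/2$. With these choices the intermediate update
$$u_n^1 = \alpha_n x_n + (1-\alpha_n)\bigl(\beta_n \bar{z}_n + (1-\beta_n) S_1 \bar{z}_n\bigr)$$
reduces to $u_n^1 = \bar{z}_n$, hence $\bar{u}_n = \bar{z}_n$. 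Consequently the set $C_n$ in Algorithm \ref{Algor.2} becomes $\{v \in H : ||\bar{z}_n - v|| \le ||x_n - v||\}$, which matches the corollary verbatim, while every remaining ingredient (the parallel projections yielding $y_n^i$ and $z_n^i$, the furthest-index rule for $i_n$, the half-space $Q_n$, and the final projection $x_{n+1} = P_{C_n \cap Q_n}(x_0)$) is literally the same in both statements.

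With the reduction in place, the strong convergence $x_n, y_n^i, z_n^i \to P_F x_0$ follows immediately from Theorem \ref{theo.2}. The parameter hypotheses stay compatible: $\kappa = 0$ makes the condition $\kappa \le \beta_n$ automatic, the step-size restriction $0 < \lambda < 1/L$ is unchanged, and, crucially, Theorem \ref{theo.2} does not require boundedness of $F$. I do not anticipate any real obstacle here; the substantive work is absorbed into Theorem \ref{theo.2}, and this argument merely records the corollary as the pure CSVIP case, obtained by switching off the pseudocontractive component via $S_1 = I$.
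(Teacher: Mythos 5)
Your reduction is correct and is exactly how the paper intends the corollary to be obtained: it states only that the result follows ``using Theorem \ref{theo.2},'' and the specialization $M=1$, $S_1=I$ (a $0$-strict pseudocontraction with $F(S_1)=H$), $\alpha_n=0$, which forces $u_n^1=\bar{z}_n$ and hence $\bar{u}_n=\bar{z}_n$ so that $C_n$ matches verbatim, is the natural (and essentially unique) way to carry that out. All parameter hypotheses of Theorem \ref{theo.2} are verified as you say, so nothing is missing.
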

\begin{remark}
Corollary $\ref{cor1}$ can be considered as an improvement of Algorithm 3.1 in \cite{CGRS2012} in the following aspects:
\begin{itemize}
\item The second projection of the extragradient method \cite{K1976} on any closed convex set $K_i$ is replaced by one on 
the half-space $T_n^i$ which is easily performed more.
\item Chosing the furthest element $\bar{z}_n$ from $x_n$ among all $z_n^i$ is a technical extension. This can help us in 
implementing numerical experiments when the number of subproblems $N$ is large without solving distance optimization problems 
on the intersection of $N+1$ closed convex sets as Algorithm 3.1 in \cite{CGRS2012}.
\item Two sets $C_n$ and $Q_n$ are either the half-spaces or the whole space $H$. Thus, by using the same techniques as in 
\cite{SS2000}, we can obtain an explicit formula of the next iterate $x_{n+1}$ which is the projection of $x_0$ on the intersection 
$C_n\cap Q_n$ (see, the numerical experiments in Section $\ref{numerical.example}$). In order to obtain the same result in 
Algorithm 3.1 \cite{CGRS2012}, the number of subcases generated by the distance optimization problem is $2^{N+1}$. This is 
a main obstacle of Algorithm 3.1 \cite{CGRS2012} in numerical experiments when $N$ is large.
\end{itemize}
\end{remark}

\section{Numerical experiments}\label{numerical.example}
In this section, we consider two numerical examples to illustrate the convergence and the ability of the implementation 
of the proposed algorithms when the numbers of subproblems are large. In these cases, Algorithm 3.1 \cite{CGRS2012} and Algorithm 4.4 for CSVIP 
(see, Section 7.2 in \cite{CGR2012b}) seem to be difficult 
to practice numerical computations.

\textit{Example 1.} We consider a simple example in $\Re^3$ for $A_i=0, S_j=I$ for all $i,j$ and $K_i$ are balls $K_i=\left\{x\in R^3:||x-a_i||\le r_i\right\}$ 
centered at $a_i$ and the radius $r_i$ for $i=1,\ldots,N$. By Theorem $\ref{theo.2}$, the sequence $\left\{x_n\right\}$ generated by Algorithm $\ref{Algor.2}$ 
converges strongly to $P_{K}(x_0)$. According to  Algorithm $\ref{Algor.2}$, we see that $y_n^i=z_n^i=x_n$ if $||x_n-a_i||\le r_i$. Otherwise, 
$y_n^i=z_n^i=\frac{r_i}{||x_n-a_i||}(x_n-a_i)+a_i$. Thus, the index $i_n$ is defined by $i_n=\arg\max\left\{0,||x_n-a_i||-r_i:i=1,\ldots,N\right\}.$ 
Since $C_n, Q_n$ are the half-spaces, $x_{n+1}$ is expressed by the explicit formula in \cite{CH2005,SS2000}. The parameters are 
$\alpha_n=\beta_n=\kappa=0$ and $\lambda=1$.

The first experiment is performed with $N=10^3, r_i=1$,
$$ a_i=\left(\frac{1}{2}\cos\frac{i\pi}{N}\sin\frac{2i\pi}{N};\frac{1}{2}\cos\frac{i\pi}{N}\cos\frac{2i\pi}{N};\frac{1}{2} \sin\frac{i\pi}{N}\right),$$
and the starting point $x_0=(1;2;7)$. Since $||a_i||=1/2<1=r_i $, $0\in K_i$ for all $i=1,\ldots,N$. So, $K=\cap_{i=1}^N K_i\ne \O$. In this experiment, 
the exact projection $P_K(x_0)$ of $x_0$ onto the feasibility set $K$ is unknown. For the fixed numbers of the iterations $n_{\max}$, 
Table $\ref{tab:1}$ gives time for PHM's execution in both parallel mode $(T_p)$ by using two processors and sequential mode $(T_s)$.

\begin{table}[ht]\caption{Results for the starting point $x_0=(1 ;2 ;7)$ and fixed numbers of iterations.}\label{tab:1}
\medskip\begin{center}
\begin{tabular}{|c|c|c|c|}
\hline
 $\qquad n_{\max} \qquad$ & \multicolumn{2}{c|}{PHM} &{$x_n$}
\\ \cline{2-3}
  & $\qquad T_p\qquad$ & $\qquad T_s\qquad$ & $\qquad \qquad$\\ \hline
 $250$ & 0.25 & 0.39 &  $(0.0699;0.0226;0.4726)$ \\
 $500$ & 0.43 & 0.79 &  $(0.0575  ;  0.0168  ;  0.3853)$ \\
  $1000$ & 0.89 & 1.60 &  $(0.0473    ;0.0119    ;0.3130)$ \\ 
$2000$ & 1.94 & 3.08 &  $(0.0388    ;0.0082    ;0.2533)$ \\ 
$5000$ & 4.05 & 7.70 &  $(0.0299    ;0.0048    ;0.1904)$ \\ \hline
 \end{tabular}\end{center}
\end{table}

The second experiment is performed with $N=10^3, r_i=1$,
$$ a_i=\left(\cos\frac{i\pi}{N}\sin\frac{2i\pi}{N};\cos\frac{i\pi}{N}\cos\frac{2i\pi}{N};\sin\frac{i\pi}{N}\right),$$
and the starting point $x_0=(-3;-5;-9)$. From $||a_i||=1=r_i $ for all $i=1,\ldots,N$, we see that the feasibility set $K=\cap_{i=1}^N K_i$ 
has the unique point $0$. So, $x^\dagger:=P_Kx_0=0$. For given tolerances TOLs, Table $\ref{tab:2}$ gives time for PHM' execution 
in both parallel mode (two processors) and sequential mode. Moreover, the sequence $\left\{x_n\right\}$ converges very quickly to $P_Kx_0$=0, 
and so our algorithm is effective.

\begin{table}[ht]\caption{Results for the starting point $x_0=(-3;-5;-9)$ and given tolerances.}\label{tab:2}
\medskip\begin{center}
\begin{tabular}{|c|c|c|c|c|}
\hline
  TOL& \multicolumn{2}{c|}{PHM} &{$x_n$}&$n_{\max}$
\\ \cline{2-3}
  & $\qquad T_p\qquad$ & $\qquad T_s\qquad$ & $\qquad \qquad$&\\ \hline
0.02500 & 0.27 & 0.51 &  $(-0.0056;       -0.0133;       -0.0212)$& $285$ \\
0.00750  & 1.04 & 1.88 &  $(-0.0024;      -0.0034;      -0.0063)$&$1088$ \\
0.00500   & 1.61 & 2.83 &  $(-0.0009      -0.0032      -0.0041)$&$1645$ \\ 
0.00150& 5.95 & 10.10 &  $(-0.0003     -0.0007      -0.0013)$& $5999$ \\ 
0.00075 & 11.70 & 21.39 &  $(-0.0001     -0.0004      -0.0005)$&$12178$ \\
 0.00015 & 53.85 & 103.97 &  $(-0.0000     -0.0001      -0.0001)$&$59416$ \\\hline
 \end{tabular}\end{center}
\end{table}
\textit{Example 2.} We consider the problem of finding a common fixed point of a finite family of mappings $\left\{S_j\right\}_{j=1}^M$. Let $H$ 
be the functional space $L^2[0,1]$, and $S_j:H\to H $ is defined by
$$
\left[S_j(x)\right](t):=\int_0^1 K_j(t,s)f_j(x(s))ds+g_j(t), j=1,2,3,4,
$$
where 
\begin{eqnarray*}
&&K_1(t,s)=\frac{2tse^{t+s}}{e\sqrt{e^2-1}}, f_1(x)=\cos x, g_1(t)=-\frac{2te^t}{e\sqrt{e^2-1}},\\ 
&&K_2(t,s)=\sqrt{3}ts, f_2(x)=\frac{1}{x^2+1}, g_2(t)=-\frac{\sqrt{3}}{2}t,\\
&&K_3(t,s)=\frac{\sqrt{21}}{7}|t-s|, f_3(x)=\sin x, g_3(t)=0,\\
&&K_4(t,s)=\frac{\sqrt{21}}{7}(t+s), f_4(x)=\exp(-x^2), g_4(t)=-\frac{\sqrt{21}}{7}\left(t+\frac{1}{2}\right).
\end{eqnarray*}
A straightforward computation ensures that $|f_j^{'}(x)|\le 1$ for all $x\in H$. Moreover, according to \cite{VGK1972}, 
the mappings $S_j$ are Frechet differentiable and $||S_j^{'}(x)h||\le ||h||$ for all $x,h\in H$. Hence, $||S_j^{'}(x)||\le 1$ for all $x\in H$. 
This implies that the mappings $S_j$ are $0$ - strict pseudocontractive on $H$. Besides, $x^\dagger=0$ 
is a common fixed point of the mappings $S_j, j=1,2,3,4$. In this example, we consider $A_i(x)=0$ for all $x\in H$ and 
$K_i=B[0,1]$ is a closed unit ball centered at the origin for each $i=1,\ldots,N$. Arccording to Algorithm 
$\ref{Algor.2}$, the intermediate aproximation $\bar{z}_n=y_n^i=z_n^i=x_n$ if $||x_n||\le 1$. Otherwise, 
$\bar{z}_n=y_n^i=z_n^i=x_n/||x_n||$. We chose $\beta_n=\kappa=0$ and calculate the approximations
\begin{equation}\label{eq:ex2.1}
\left\{
\begin{array}{ll}
&u_n^1(t)=\alpha_n x_n(t)+(1-\alpha_n)\frac{2te^{t}}{e\sqrt{e^2-1}}\left\{\int_0^1 se^s\cos \bar{z}_n(s)ds-1\right\},\\ 
& u_n^2(t)=\alpha_n x_n(t)+(1-\alpha_n)\sqrt{3}t\left\{\int_0^1 \frac{sds}{1+\bar{z}_n^2(s)}-\frac{1}{2}\right\},\\ 
& u_n^3(t)=\alpha_n x_n(t)+(1-\alpha_n)\frac{\sqrt{21}}{7}\int_0^1 |t-s|\sin \bar{z}_n(s)ds,\\
& u_n^4(t)=\alpha_n x_n(t)+(1-\alpha_n)\frac{\sqrt{21}}{7}\left\{\int_0^1 (t+s)e^{-\bar{z}_n^2(s)}ds-t-\frac{1}{2}\right\}.
\end{array}
\right.
\end{equation}
Thus, the furthest element $\bar{u}_n(t)$ from $x_n(t)$ among all $u_n^j(t)$ is chosen. The next iterate $x_{n+1}$ is also computed 
by the explicit formula in \cite{CH2005,SS2000}.

\begin{table}[ht]\caption{Experiment with the starting point $x_0(t)=1.$}\label{tab:4}
\medskip\begin{center}
\begin{tabular}{|c|c|c|c|}
\hline
 $\qquad n_{\max} \qquad$ & \multicolumn{2}{c|}{PHM} &{$TOL$}
\\ \cline{2-3}
  & $\qquad T_p\qquad$ & $\qquad T_s\qquad$ & $\qquad \qquad$\\ \hline
 $5$ & 6.85 & 12.97 &  $0.06427$ \\
$10$ & 25.13 & 49.44 &  $0.01042$ \\
$15$ & 64.56 & 123.12 &  $0.00090$ \\
$20$ & 121.32 & 233.84 &  $0.00056$ \\ \hline
 \end{tabular}\end{center}
\end{table}

\begin{table}[ht]\caption{Experiment with the starting point $x_0(t)=\frac{1}{100}e^{-10t}\sin(1000t).$}\label{tab:5}
\medskip\begin{center}
\begin{tabular}{|c|c|c|c|}
\hline
 $\qquad n_{\max} \qquad$ & \multicolumn{2}{c|}{PHM} &{$TOL$}
\\ \cline{2-3}
  & $\qquad T_p\qquad$ & $\qquad T_s\qquad$ & $\qquad \qquad$\\ \hline
 $5$ & 5.13 & 9.99 &  $0.00322$ \\
$10$ & 13.95 & 26.51 &  $0.00050$ \\
$15$ & 29.01 & 57.36 &  $0.00035$ \\
$20$ & 57.12 & 112.44 &  $0.00025$ \\ \hline
 \end{tabular}\end{center}
\end{table}
All programs are written in the C programming language. They are performed on the computing cluster LINUX IBM 1350 with 8 computing nodes. 
Each node contains two Intel Xeon dual core 3.2 GHz, 2GBRam. We use the following notations:

\begin{center}
\begin{tabular}{l l}
PHM & The parallel hybrid method\\
$TOL$ & Tolerance $\|x_n - x^\dagger\|$ \\
$T_p$  & Execution time of PHM in parallel mode (2CPUs - in seconds)\\
$T_s$ & Execution time of PHM in sequential mode (in seconds) \\
\end{tabular}
\end{center}

\begin{center}
\begin{figure}\sidecaption
\resizebox{0.65\hsize}{!}{\includegraphics{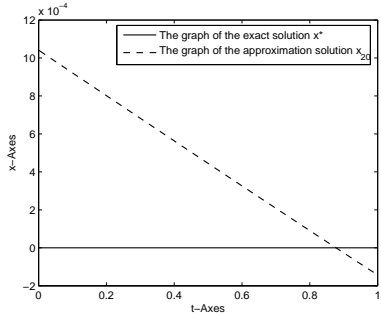}}
\caption{Geometric illustration of the exact solution $x^*(t)=0$ and the approximation solution $x_{20}(t)$ with the starting point $x_0(t)=1.$}
\label{fig:01}
\end{figure}
\end{center}

\begin{center}
\begin{figure}\sidecaption
\resizebox{0.50\hsize}{!}{\includegraphics{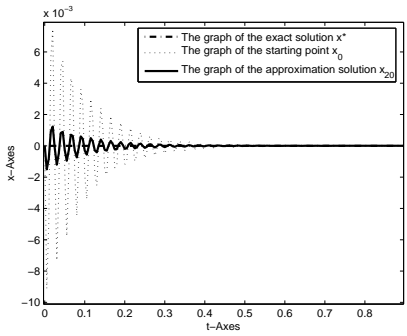}}
\caption{Geometric illustration of the exact solution $x^*(t)=0$ and the approximation solution $x_{20}(t)$ with the starting point $x_0(t)=1/100\exp(-10t)\sin(1000t)$.}
\label{fig:02}
\end{figure}
\end{center}

All integrals in Example 2  are calculated by 
using the trapezoidal formula with the stepsize $\tau=0.001$. In the next two experiments, we chose 
$\beta_n=\kappa=0$ and $\alpha_n=\frac{1}{n+1}$. The former is performed with the starting point $x_0(t)=1$ and the latter is with 
$x_0(t)=\frac{1}{100}e^{-10t}\sin(1000t)$. For the fixed numbers of iterations $n_{\max}$, Tables $\ref{tab:4}$ and $\ref{tab:5}$ 
give execution time of PHM in parallel mode ($T_p$) by using two processors and sequential mode ($T_s$). The last column are the 
obtained tolerances which are the distances from the approximation solutions to the exact solution $x^\dagger$. Figures $\ref{fig:01}$ 
and $\ref{fig:02}$ illustrate the graphs of the the starting point $x_0(t)$, the approximation solution $x_{20}(t)$ and the exact solution 
$x^\dagger(t)=0$. From our numerical experiments, we see that the maximal speed-up of the proposed parallel algorithm is 
$S_p=T_s/T_p \approx 2.0$. So, the efficiency of the parallel computation by using two processors is $E_p=S_p/2\approx 1.0$.
%

\end{document}